\theoremstyle{definition}
\newtheorem{remark}{Remark}
\theoremstyle{plain}
\newtheorem{theorem}{Theorem}
\newtheorem{lemma}{Lemma}
\newtheorem{corollary}{Corollary}
\author{Shoou-Ren Hsiau  and Yi-Ching Yao\\
National Changhua  University of Education and Academia Sinica}
\title{Stochastic ordering for Bernoulli and normal random walks}
\begin{document}

\newcommand{\ML}{\mathcal{L}}
\newcommand{\MP}{\mathbb{P}}
\newcommand{\MS}{\mathbb{S}}
\newcommand{\MST}{\widetilde{\mathbb{S}}}
\newcommand{\MSTU}{\widetilde{\mathbb{S}^{+}}}
\newcommand{\MSTD}{\widetilde{\mathbb{S}^{-}}}
\newcommand{\MSO}{\widehat{\mathbb{S}}}
\newcommand{\MSOU}{\widehat{\mathbb{S}^+}}
\newcommand{\MSOD}{\widehat{\mathbb{S}^-}}
\newcommand{\HT}{\widehat{T}}
\newcommand{\CS}{\mathcal{S}}
\newcommand{\MM}{\mathcal{M}}
\newcommand{\MG}{\mathcal{G}}
\newcommand{\half}{\frac{1}{2}}

\maketitle
\begin{abstract}
Let  $(S_n^p)_{n\geq 0}$ be a Bernoulli random walk where each of the independent increments is 
either $1$ or $-1$ with probabilities $p$ and $1-p$. For $p'$ and $p'' \in [0,1]$ with
 $|p'-\half|>|p''-\half|$, we show that $(|S_n^{p''}|)_{n\geq 0}$ is stochastically smaller
 than $(|S_n^{p'}|)_{n\geq 0}$. In other words,  $(|S_n^{p}|)_{n\geq 0}$ is stochastically
 decreasing in $p \in [0,\half]$ and increasing in $p\in [\half,1]$. An analogous result is also given
 for the family of normal random walks indexed by $\mu \in R$ where each of the independent increments
  is normally distributed with common mean
 $\mu$ and variance $1$. Extension to Brownian motion then follows by a limiting argument.
 As an application, these results are used to easily derive stochastic
 ordering properties for stopping times of Bernoulli and normal random walks.

Keywords:  Coupling; integral stochastic order; Kiefer-Weiss problem in sequential analysis; stochastic comparison; 
Strassen's theorem. 

\textbf{AMS MSC 2020}: Primary 60E15, Secondary 60G40
\end{abstract}





\section{Introduction and main results}

We are concerned here with stochastic ordering properties for Bernoulli and normal random walks.
For $p \in [0,1]$, let $(S_n^{p})_{n \geq 0}$ be a Bernoulli random walk where
 $S_0^{p}=0, S_n^{p}=X_1^{p}+\cdots+X_n^{p}\;(n\geq 1)$, and the increments $X_1^{p}, X_2^{p}, \dots$ are independent and identically distributed (i.i.d.) with $P(X_n^{p}=1)=p$ and $P(X_n^{p}=-1)=q=1-p$.
We write  $\textbf{U} \overset{d}{=} \textbf{V}$ to mean that $\textbf{U}$ and $\textbf{V}$ 
have the same distribution where 
$\textbf{U}$ and $\textbf{V}$ are finite- or infinite-dimensional random vectors. For infinite-dimensional 
$\textbf{U}=(U_n)_{n \geq 0}$ and $\textbf{V}=(V_n)_{n\geq 0}$,
 we say that $\textbf{U}$ is stochastically smaller than $\textbf{V}$ and write 
$\textbf{U} \preceq_{st} \textbf{V}$
if $Ef(\textbf{U})\leq Ef(\textbf{V})$
for all bounded increasing measurable functions $f: R^{\infty} \to R$,
where $f$ is increasing (or more precisely, non-decreasing) if 
$f(x_0,x_1,\dots)\leq f(y_0, y_1,\dots)$ whenever $x_i\leq y_i$ for all $i$. 
(The infinite-dimensional  space $R^\infty$ is endowed with the product topology and
  measurability is with respect to the Borel $\sigma-\text{field}$.) 
Likewise, $\textbf{U} \preceq_{st} \textbf{V}$ is similarly defined
for finite-dimensional $\textbf{U}$ and $\textbf{V}$. 
Thus, the (integral) stochastic order $\preceq_{st}$ is
defined with respect to the class of bounded increasing measurable functions.
(See \cite[Chapter 2]{MS} for a comprehensive discussion
of integral stochastic orders.)
 For $p+p'=1$, since $X_n^{p} \overset{d}{=} -X_n^{p'}$, we have 
$(S_n^{p})_{n \geq 0} \overset{d}{=}(-S_n^{p'})_{n \geq 0}$. So,
$(|S_n^{p}|)_{n\geq 0} \overset{d}{=}(|S_n^{p'}|)_{n\geq 0}$ for $p+p'=1$.
Theorem \ref{thm1} below shows that  $(|S_n^p|)_{n \geq 0}$ is  stochastically
decreasing in $ p \in [0,\half]$ and increasing in $p \in [\half, 1]$.

\begin{theorem}\label{thm1}
For $p', p'' \in [0,1]$ satisfying $|p'-\half|> |p''-\half|$, we have $(|S_n^{p''}|)_{n\geq 0} 
\preceq_{st} (|S_n^{p'}|)_{n\geq 0}$.
\end{theorem}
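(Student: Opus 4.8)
The plan is to reduce the statement to an explicit coupling of two reflecting birth--death chains. First I would use the symmetry $(|S_n^{p}|)_{n\geq 0}\overset{d}{=}(|S_n^{1-p}|)_{n\geq 0}$ noted above to assume, without loss of generality, that $\half\le p''<p'\le 1$: replacing $p'$ by $1-p'$ and/or $p''$ by $1-p''$ leaves the laws of the absolute-value processes unchanged and turns $|p'-\half|>|p''-\half|$ into $\half\le p''<p'$. It then suffices, by the coupling (``if'') direction of Strassen's theorem, to build on one probability space two processes $Y\overset{d}{=}(|S_n^{p''}|)_{n\ge 0}$ and $Z\overset{d}{=}(|S_n^{p'}|)_{n\ge 0}$ with $Y_n\le Z_n$ for all $n$ almost surely; every bounded increasing $f$ then satisfies $Ef(Y)\le Ef(Z)$, which is exactly $\preceq_{st}$.

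The crucial step, which I expect to be the main obstacle, is to show that the reflected process $(|S_n^{p}|)_{n\ge 0}$ is itself a Markov chain and to identify its transition probabilities. Fix $p$ and condition on the whole reflected path up to time $n$ with $|S_n|=k>0$. Splitting the path at its visits to $0$, each completed excursion has equal numbers of $+1$ and $-1$ increments, so its two possible signs carry the same probability; hence, given the reflected path, the signs of completed excursions are irrelevant and only the current (incomplete) excursion matters. That excursion has net displacement $\pm k$, so its positive and negative versions have probabilities in the ratio $(p/q)^{k}$, whence
$$P(S_n>0\mid |S_0|,\dots,|S_n|,\ |S_n|=k)=\frac{p^{k}}{p^{k}+q^{k}}.$$
Since from a positive (resp.\ negative) state the reflected walk steps up with probability $p$ (resp.\ $q$), this shows $(|S_n^{p}|)_{n\ge 0}$ is a birth--death chain on $\{0,1,2,\dots\}$, reflecting at $0$, with $r_0(p)=1$ and
$$r_k(p)=\frac{p^{k}}{p^{k}+q^{k}}\,p+\frac{q^{k}}{p^{k}+q^{k}}\,q=\frac{p^{k+1}+q^{k+1}}{p^{k}+q^{k}},\qquad k\ge 1,$$
depending only on the current level $k$.

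Two monotonicity facts then drive the coupling. Writing $r_k=w_k\,p+(1-w_k)q$ with $w_k=p^{k}/(p^{k}+q^{k})\ge\half$ for $p\ge\half$, a short computation gives $\frac{d}{dp}r_k=\frac{dw_k}{dp}(p-q)+(2w_k-1)\ge 0$ on $[\half,1]$, since $w_k$ is increasing in $p$ and $p\ge q$; thus $r_k(p'')\le r_k(p')$ for every $k$. Moreover $|S_n^{p}|\equiv n \pmod 2$, so at every time $n$ the chains $Y_n$ and $Z_n$ have the same parity and the difference $Z_n-Y_n$ is always even.

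Finally I would run the following synchronous coupling from $Y_0=Z_0=0$. Whenever $Y_n=Z_n=k$, use a single uniform variable $U$ to decide both steps monotonically (each steps up iff $U\le r_k(\cdot)$); this is legitimate because $r_k(p'')\le r_k(p')$, and it keeps the two together or opens a gap of $2$. Whenever $Y_n<Z_n$, let the two chains move independently according to their own transition rules. Because the gap $Z_n-Y_n$ is always even, it is never equal to $1$, so the dangerous configuration ``$Y$ forced up from $0$ while $Z$ steps down from the adjacent level $1$'' cannot arise; consequently $Z_n-Y_n$ stays nonnegative (and even) for all $n$. By construction each marginal is the correct birth--death chain, so $Y\overset{d}{=}(|S_n^{p''}|)_{n\ge 0}$ and $Z\overset{d}{=}(|S_n^{p'}|)_{n\ge 0}$ with $Y_n\le Z_n$ almost surely, which completes the proof.
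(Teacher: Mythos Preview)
Your proposal is correct and follows essentially the same route as the paper: both arguments rest on (i) recognizing that $(|S_n^{p}|)_{n\ge 0}$ is a birth--death Markov chain with upward probability $r_k(p)=(p^{k+1}+q^{k+1})/(p^k+q^k)$, (ii) the monotonicity of $r_k(p)$ in $p$ on each side of $\tfrac12$, and (iii) a coupling that exploits the parity constraint so that the case $\gamma_n-\delta_n\ge 2$ is harmless. The only cosmetic difference is that the paper invokes the abstract coupling criterion of Lemma~\ref{lem2} (verifying \eqref{eq991}) and cites \cite{Ross} for the Markov property, whereas you derive the Markov property via an excursion-sign argument and build the monotone coupling by hand; your explicit construction is precisely what the proof of Lemma~\ref{lem2} would produce in this setting.
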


We further consider two  closely related random walks  $(U_n^p)_{n \geq 0}$ and $(V_n^p)_{n \geq 0}$ 
with a random initial state 
defined as follows. Let $X_0 \in \{+1,-1\}$ be independent of $X_1^p,X_2^p,\dots$, such that $X_0$ is either $+1$ or $-1$ with equal probabilities.
Let $U_0^p=V_0^p=X_0$ and for $n\geq 1$,
\begin{align*}
U_n^p&=X_0+X_1^p+\dots+X_n^p=X_0+S_n^p,\\
V_n^p&=X_0+2X_1^p+\cdots+2X_n^p=X_0+2S_n^p.
\end{align*}
Note that for $p+p'=1$, we have
\[ (|U_n^p|)_{n\geq 0} \overset{d}{=}(|U_n^{p'}|)_{n\geq 0}\;\;\;\text{and}\;\;\;
(|V_n^p|)_{n\geq 0} \overset{d}{=}(|V_n^{p'}|)_{n\geq 0}.
\]
Note also that $V_n^p$ is odd  for all $n$ with probability 1.  
\begin{theorem}\label{thm2}
For $p'=\half \neq p \in [0,1]$, we have
$(|U_n^{p'}|)_{n\geq 0} \preceq_{st} (|U_n^{p}|)_{n\geq 0}$.
Consequently, $(|U_n^{p}|)_{n\geq 0}$ is stochastically minimized over
$ p \in [0,1]$ by $p=\half$.
\end{theorem}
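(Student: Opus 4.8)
The plan is to prove the inequality by exhibiting, through Strassen's theorem, a coupling of the two processes under which $|U_n^{\half}|\le|U_n^{p}|$ holds for every $n$ almost surely; since the product order on $R^\infty$ is closed, such a coupling exists if and only if $(|U_n^{\half}|)_{n\ge0}\preceq_{st}(|U_n^{p}|)_{n\ge0}$. Using the symmetry $(|U_n^{p}|)_{n\ge0}\overset{d}{=}(|U_n^{1-p}|)_{n\ge0}$ noted in the text, I may assume $p>\half$. The elementary tool I would use repeatedly is the monotonicity lemma: if $x,y$ are integers of the same parity with $xy\ge0$ and $|x|\le|y|$, then $|c+x|\le|c+y|$ for either $c\in\{+1,-1\}$ (for $x,y\ge0$ one checks that $|c+x|$ is nondecreasing in $x$ within each parity class, and $x,y\le0$ follows by reflection). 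Consequently, if one could couple the two signed walks so that they stay on the same side of the origin with the $p$-walk always at least as far out, then using a common initial sign $X_0$ the lemma would at once give $|U_n^{\half}|=|X_0+S_n^{\half}|\le|X_0+S_n^{p}|=|U_n^{p}|$.

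The hard part is that this naive ``same-side, common-$X_0$'' coupling cannot exist, so the gain must be extracted from the averaging over $X_0$. Indeed, conditionally on $X_0=-1$ one has $P(|U_1^{\half}|=2\mid X_0=-1)=\half$ while $P(|U_1^{p}|=2\mid X_0=-1)=1-p<\half$, so on $\{X_0=-1\}$ even the first coordinates are ordered the wrong way and no coupling sharing the initial sign can dominate. The resolution I would pursue is a coupling in which the symmetric starting sign of the $p$-walk is itself correlated with the trajectory of the symmetric walk, so that the strongly dominating component $\{X_0=+1\}$ of $(|U_n^{p}|)$ covers exactly those excursions of the symmetric walk that the weakly dominating component $\{X_0=-1\}$ cannot. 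Concretely I would realize $|U_n^{\half}|$ as the reflecting walk $|1+S_n^{\half}|$ started at $1$ (legitimate since $(|U_n^{\half}|)\overset{d}{=}(|1+S_n^{\half}|)$), and build the coupled signed $p$-walk $B$ so that it first departs from the origin in the same direction as the symmetric walk and is thereafter pushed outward, using the drift surplus $p-\half$, whenever the symmetric walk moves away from $0$. The degenerate case $p=1$ already forces the rule: there $S_n^{1}=n$, the process $|U_n^{1}|$ takes only the two deterministic paths $(1,2,3,\dots)$ and $(1,0,1,2,\dots)$, and one must set $X_0=-1$ precisely on the event that the symmetric walk first steps down (so $|1+S_1^{\half}|=0$); with this pairing $|U_n^{\half}|\le|U_n^{1}|$ then propagates automatically because the reflecting walk has increments of absolute value $1$.

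The step I expect to be the main obstacle is verifying that this trajectory-dependent coupling simultaneously (i) keeps $X_0$ symmetric and independent of the $p$-walk's own increments, so that the constructed $B$ genuinely has the law of $U^{p}$, and (ii) maintains the invariant $|1+S_n^{\half}|\le|B_n|$ for all $n$. The difficulty is concentrated at two kinds of times: the reflection times, when the symmetric walk sits at the origin and the coupling must commit to the next excursion's direction, and the excursions of $B$ onto its anti-drift (negative) side, where $|B_n|$ tends to shrink while the symmetric walk's distance to the origin may grow, so that the accumulated drift surplus must be ``banked'' in advance to cover the symmetric walk's later outward moves. I would organize the verification as an induction on $n$, splitting into these cases and applying the monotonicity lemma at each step. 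If this stepwise construction proves too delicate to close directly, the fallback is to bypass the explicit coupling and instead establish the equivalent analytic statement $Ef(|U^{p}|)\ge Ef(|U^{\half}|)$ for every bounded increasing $f$, by showing that the symmetrized quantity $\tfrac12\big(Ef(|U^{p}|)+Ef(|U^{1-p}|)\big)=Ef(|U^{p}|)$ is nondecreasing in $|p-\half|$; the same origin-crossing phenomenon is what makes this monotonicity subtle, and it is where the work again concentrates.
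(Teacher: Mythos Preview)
Your plan never reaches a proof: the explicit pathwise coupling is only sketched, with the verification of (i) and (ii) left as ``the main obstacle,'' and the fallback is merely named. More importantly, the whole architecture is aimed at the wrong level. You are trying to couple the \emph{signed} walks $X_0+S_n^{\half}$ and $X_0+S_n^{p}$ and then take absolute values, which forces you to manage sign changes and to correlate the $p$-walk's initial sign with the symmetric walk's trajectory. The paper avoids all of this by working directly with the \emph{absolute-value} processes and invoking Lemma~\ref{lem2}: it suffices to show that for every admissible history $(\gamma_0,\dots,\gamma_n)$ with $\gamma_n=k$,
\[
P\bigl(|U_{n+1}^{p}|=k+1 \;\big|\; |U_i^{p}|=\gamma_i,\ i\le n\bigr)\ \ge\
P\bigl(|U_{n+1}^{\half}|=k+1 \;\big|\; |U_i^{\half}|=\delta_i,\ i\le n\bigr)
\]
whenever $\delta_i\le\gamma_i$ and $\delta_n=\gamma_n=k$ (the case $\gamma_n\ge\delta_n+2$ being trivial). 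The right side is exactly $\half$ for $k\ge1$ because $(|U_n^{\half}|)_{n\ge0}$ is the reflected symmetric walk. The left side is computed directly: if the history has touched $0$ it equals $(p^{k+1}+q^{k+1})/(p^{k}+q^{k})\ge\half$ by Lemma~\ref{lem1}(i); if not, it equals $(p^{k}+q^{k})/(p^{k-1}+q^{k-1})\ge\half$. That single computation, together with the $k=0$ case, is the entire proof. What you are missing is the realization that although $(|U_n^{p}|)_{n\ge0}$ fails to be Markov for $p\neq\half$, its one-step conditional probability of moving outward is \emph{always} at least $\half$ regardless of the history, and this alone feeds Lemma~\ref{lem2}.

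Two further remarks. First, your fallback---showing $Ef(|U^{p}|)$ is nondecreasing in $|p-\half|$---is strictly stronger than Theorem~\ref{thm2}; the paper explicitly notes (Remark~\ref{rem7}) that the comparison $(|U_n^{p''}|)\preceq_{st}(|U_n^{p'}|)$ for general $|p'-\half|>|p''-\half|$ is \emph{not} accessible by Lemma~\ref{lem2} because the required conditional ordering fails, so your fallback is harder, not easier. Second, your ``monotonicity lemma'' and the attempt to keep the two signed walks on the same side are exactly the complications that disappear once you compare conditional laws of the absolute values directly.
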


\begin{theorem}\label{thm3}
For $p', p'' \in [0,1]$ satisfying $|p'-\half|>|p''-\half|$, we have
$(|V_n^{p''}|)_{n\geq 0} \preceq_{st} (|V_n^{p'}|)_{n\geq 0}$.
Consequently,  $(|V_n^{p}|)_{n\geq 0}$ is stochastically decreasing
in $p \in [0,\half]$ and  increasing in $p \in [\half,1]$.
\end{theorem}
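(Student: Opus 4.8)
The plan is to reduce the claim to a pathwise comparison and then invoke Strassen's theorem, exactly as the keyword list suggests. Since $(|V_n^{p}|)_{n\ge 0}\overset{d}{=}(|V_n^{1-p}|)_{n\ge 0}$, I may assume without loss of generality that $\half\le p''<p'\le 1$; under this normalization the hypothesis $|p'-\half|>|p''-\half|$ becomes simply $p''<p'$. By Strassen's theorem the asserted order $(|V_n^{p''}|)_{n\ge 0}\preceq_{st}(|V_n^{p'}|)_{n\ge 0}$ is equivalent to the existence of a single probability space carrying processes $\widehat V^{p''}\overset{d}{=}V^{p''}$ and $\widehat V^{p'}\overset{d}{=}V^{p'}$ with $|\widehat V_n^{p''}|\le|\widehat V_n^{p'}|$ for every $n$, almost surely. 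Thus the entire problem is to construct this coupling.

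First I would strip off the random initial sign. Writing $V_n^{p}=X_0+2S_n^{p}$, I condition on $X_0$ and use the symmetry $X_0\leftrightarrow -X_0$ to reduce to a single value, say $X_0=-1$, for which $|V_n^{p}|=|2S_n^{p}-1|=2\,\mathrm{dist}(S_n^{p},\half)$. It therefore suffices to couple the two Bernoulli walks $S^{p''}$ and $S^{p'}$ (sharing the same $X_0$ and started at $0$) so that the distance of the walk from the half-integer center $\half$ is ordered pathwise. In this form the statement is the precise analogue of Theorem~\ref{thm1}, whose center is the integer $0$; the only, but essential, change is that the center is now a midpoint the walk never occupies. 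This is exactly the parity feature (oddness of $V_n^{p}$) that I expect to be responsible for full monotonicity here, in contrast with the weaker conclusion of Theorem~\ref{thm2}.

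For the coupling itself I would follow the drift. Both walks have up-probability at least $\half$ and hence drift to the right of the center, so I would let them move synchronously (the same increment, driven by a common uniform) while both are on the right-hand side, where the larger up-probability $p'$ carries $S^{p'}$ at least as far out as $S^{p''}$. Whenever a walk crosses to the left-hand (against-the-drift) side I would instead apply a reflection about the center, so as to keep the stronger walk on the favorable side and let its distance continue to dominate. The construction must of course respect both marginal laws while enforcing $|2S_n^{p''}-1|\le|2S_n^{p'}-1|$ at every step.

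The hard part will be precisely this last requirement, and it is genuinely delicate: a naive one-step (Markovian) coupling in the pair $(\text{distance},\,\text{side})$ cannot succeed. Indeed, if at some time both walks sit at the same distance on the against-the-drift side, then the event that the $p'$-walk steps \emph{toward} the center while the $p''$-walk steps \emph{away} has probability at least $p'+(1-p'')-1=p'-p''>0$ under any coupling, so the distance order is broken with positive probability in a single step. Consequently the reflection bookkeeping is unavoidable, and the real obstacle is to schedule the reflections \emph{globally} so that both marginals are preserved while the magnitude order is maintained across the center-crossing (renewal) times, which occur at different epochs for the two walks. I would attack this by an excursion decomposition, matching the right-excursions (where those of $S^{p'}$ reach stochastically farther) and disposing of the left-excursions (which are transient against the drift) through reflection. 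Once the pathwise bound is established for $X_0=-1$, the mirror argument handles $X_0=+1$, and reattaching the shared $X_0$ yields the full coupling; Strassen's theorem then delivers the stated stochastic order, and the monotonicity of $(|V_n^{p}|)_{n\ge 0}$ in $p$ follows at once.
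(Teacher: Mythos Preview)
Your reduction to a fixed value of $X_0$ is where the argument breaks, and it breaks immediately, not merely at the level of ``the coupling is hard to build.'' Take $\half\le p''<p'\le 1$ and condition on $X_0=-1$ as you propose. Then $|V_1^{p}|=|{-1}+2X_1^{p}|$ equals $1$ with probability $p$ and $3$ with probability $1-p$, so
\[
P\big(|V_1^{p'}|=3\,\big|\,X_0=-1\big)=1-p'<1-p''=P\big(|V_1^{p''}|=3\,\big|\,X_0=-1\big).
\]
Thus, conditional on $X_0=-1$, already the one-dimensional marginal $|V_1^{p'}|$ is stochastically \emph{smaller} than $|V_1^{p''}|$, so no coupling with $|2S_n^{p''}-1|\le|2S_n^{p'}-1|$ for all $n$ can exist. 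Your later diagnosis (``both walks on the against-the-drift side at the same distance'') is exactly this phenomenon at time $0$: with $X_0=-1$ and $p>\half$ the process starts on the wrong side of the center, and the stronger drift pulls the $p'$-walk \emph{in} faster. The obstacle is not a bookkeeping nuisance to be cured by excursion matching; the conditional ordering you are trying to prove is simply false.

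The randomness of $X_0$ is not cosmetic here---it is what makes the theorem true. Unconditionally, $P(|V_1^{p}|=3)=\half p+\half(1-p)=\half$ for every $p$, and more generally the paper shows that with $X_0$ uniform on $\{\pm1\}$ the process $(|V_n^{p}|)_{n\ge0}$ is itself a (non-homogeneous) Markov chain: for any admissible history $(k_0,\dots,k_n)$ of $|V^{p}|$, the conditional probability $P(|V_{n+1}^{p}|=k_n+2\mid |V_i^{p}|=k_i,\,i\le n)$ depends only on $k_n$ and on the parity of $(k_n-1)/2$ relative to $n$, and equals either $(p^{(k_n+1)/2}+q^{(k_n+1)/2})/(p^{(k_n-1)/2}+q^{(k_n-1)/2})$ or $(p^{(k_n+3)/2}+q^{(k_n+3)/2})/(p^{(k_n+1)/2}+q^{(k_n+1)/2})$. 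These ratios are monotone in $p\in[0,\half]$ and in $k_n$ (Lemma~\ref{lem1}), which is precisely what is needed to verify the one-step comparison condition of Lemma~\ref{lem2} for $(|V_n^{p''}|)_{n\ge0}$ against $(|V_n^{p'}|)_{n\ge0}$. So the correct route is to keep $X_0$ random, prove Markovianity of $|V^{p}|$, and then run the standard Markov-chain comparison; the explicit sample-path coupling you are chasing is both unnecessary and, in the conditioned form you set up, impossible.
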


Next we present an analogous result for   normal random walks. Let  $(W_n^{\mu})_{n\geq 0}$ be a 
normal random  walk defined by  
$W_0^{\mu}=0$ and
$W_n^{\mu}=Z_1^\mu+\cdots+Z_n^\mu \;(n\geq 1)$, where the independent increments $Z_1^\mu, Z_2^\mu,\dots$
are normally distributed with common mean $\mu$ and variance $1$.
Note that $(|W_n^\mu|)_{n\geq 0} \overset{d}{=} (|W_n^{-\mu}|)_{n\geq 0}$. 
\begin{theorem}\label{thm4}
For $\mu', \mu'' \in R$ satisfying $|\mu'|<|\mu''|$, we have
 $(|W_n^{\mu'}|)_{n\geq 0} \preceq_{st} (|W_n^{\mu''}|)_{n\geq 0}$.
Consequently, $(|W_n^\mu|)_{n\geq 0}$  is stochastically decreasing
in $\mu \in (-\infty, 0]$ and  increasing in $\mu \in [0,\infty)$.
\end{theorem}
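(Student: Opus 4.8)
The plan is to deduce Theorem~\ref{thm4} from the already-established Bernoulli result (Theorem~\ref{thm1}) by an invariance-principle argument: approximate each normal increment by a large number of small rescaled Bernoulli increments and pass to the limit. By the symmetry $(|W_n^\mu|)_{n\ge0}\overset{d}{=}(|W_n^{-\mu}|)_{n\ge0}$ it suffices to treat $0\le\mu'<\mu''$, and to obtain the infinite-dimensional order it is enough, by the standard closure properties of the integral order $\preceq_{st}$ recorded in \cite[Chapter~2]{MS}, to verify it on the level of all finite-dimensional marginals.

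First I would fix a drift $\mu\ge0$ and, for each integer $m>\mu^2$, set $p_m=\half+\frac{\mu}{2\sqrt m}\in[0,1]$. Packing $m$ Bernoulli steps into one unit of ``normal time'' and rescaling by $m^{-1/2}$, define the approximating process
\[ R^{(m,\mu)}_n=\frac{1}{\sqrt m}\,\bigl|S^{p_m}_{nm}\bigr|,\qquad n\ge0. \]
The one-step increment $m^{-1/2}\bigl(S^{p_m}_{nm}-S^{p_m}_{(n-1)m}\bigr)$ is $m^{-1/2}$ times a sum of $m$ i.i.d.\ $\pm1$ variables with mean $2p_m-1=\mu/\sqrt m$ and variance $1-\mu^2/m$, so it has mean $\mu$ and variance $1-\mu^2/m\to1$; the Lindeberg condition is trivial since the summands are bounded by $m^{-1/2}\to0$. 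Hence, by the triangular-array central limit theorem and independence across disjoint blocks, for every fixed $N$ the vector $\bigl(m^{-1/2}S^{p_m}_{nm}\bigr)_{0\le n\le N}$ converges in distribution to $(W^\mu_n)_{0\le n\le N}$, and applying $|\cdot|$ coordinatewise gives $\bigl(R^{(m,\mu)}_n\bigr)_{0\le n\le N}\Rightarrow(|W^\mu_n|)_{0\le n\le N}$ as $m\to\infty$.

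Next I would invoke Theorem~\ref{thm1} at each fixed level $m$. Since $|p'_m-\half|=\mu'/(2\sqrt m)<\mu''/(2\sqrt m)=|p''_m-\half|$, Theorem~\ref{thm1} yields $\bigl(|S^{p'_m}_n|\bigr)_{n\ge0}\preceq_{st}\bigl(|S^{p''_m}_n|\bigr)_{n\ge0}$. The map sending a path $(s_n)_{n\ge0}$ to $\bigl(m^{-1/2}s_{nm}\bigr)_{n\ge0}$ is coordinatewise non-decreasing (each output coordinate is a positive multiple of a single input coordinate) and measurable, and $\preceq_{st}$ is preserved by such maps (for bounded increasing measurable $f$, the composition $f\circ g$ is again bounded increasing measurable); therefore $\bigl(R^{(m,\mu')}_n\bigr)_{n\ge0}\preceq_{st}\bigl(R^{(m,\mu'')}_n\bigr)_{n\ge0}$ for all large $m$, in particular on the first $N+1$ coordinates. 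Letting $m\to\infty$ and using that $\preceq_{st}$ is closed under weak convergence, together with the two weak limits from the previous paragraph, I obtain $(|W^{\mu'}_n|)_{0\le n\le N}\preceq_{st}(|W^{\mu''}_n|)_{0\le n\le N}$ for every $N$, which upgrades to the claimed infinite-dimensional order.

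I expect the delicate step to be the interchange of the limit with the stochastic order: justifying that $\preceq_{st}$, defined here through bounded increasing \emph{measurable} test functions, is stable under weak convergence. The clean way around this is to note that on the Polish space $R^\infty$ (product topology, closed product order) $\preceq_{st}$ is equivalently characterized by bounded increasing \emph{continuous} functions of finitely many coordinates, for which the weak-limit inequality is immediate; this equivalence, the reduction to finite-dimensional marginals, and the preservation of $\preceq_{st}$ under monotone maps are precisely the closure results collected in \cite[Chapter~2]{MS}, which ultimately rest on Strassen's theorem. The remaining ingredients---the central limit computation and the monotonicity of the subsampling-and-rescaling map---are routine.
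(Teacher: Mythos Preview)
Your argument is correct but follows a genuinely different route from the paper. The paper proves Theorem~\ref{thm4} \emph{from scratch}: it computes the joint density of $(|W_1^\mu|,\dots,|W_n^\mu|)$ explicitly, shows that $(|W_n^\mu|)_{n\ge0}$ is a Markov chain with transition density $\rho(y;x,\mu)$ given in closed form, and verifies that for $0\le\mu'<\mu''$ and $0<x'\le x''$ the ratio $\rho(y;x'',\mu'')/\rho(y;x',\mu')$ is increasing in $y$ (likelihood-ratio order of the one-step transitions), which feeds directly into the coupling criterion of Lemma~\ref{lem2}. Your approach instead \emph{deduces} Theorem~\ref{thm4} from Theorem~\ref{thm1} via a triangular-array CLT, the preservation of $\preceq_{st}$ under monotone maps, and its closure under weak limits---essentially the same limiting machinery the paper later uses in Remark~6 to pass from the normal random walk to Brownian motion, but applied one level down. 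Your method is shorter, avoids all density computations, and makes transparent that the normal result is a corollary of the Bernoulli one; the paper's direct computation, on the other hand, yields the explicit transition kernel and the stronger conclusion that the conditional one-step laws are ordered in the likelihood-ratio sense, information not recoverable from your argument.
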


The next four corollaries are simple consequences of  Theorems \ref{thm1}--\ref{thm4}.

\begin{corollary}\label{cor1}
For integers $b_n \geq 0 \;(n\geq 1)$, define the stopping time
 $T_{b_1,b_2,\dots}^{p}=\inf\{n \geq 1: |S_n^p|\geq b_n\}$.
Then $T_{b_1,b_2,\dots}^p$
is stochastically increasing in $p \in [0,\half]$ and decreasing in $p \in [\half,1]$.
\end{corollary}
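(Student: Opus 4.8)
The plan is to exhibit the stopping time as a decreasing functional of the absolute-value path and then invoke Theorem \ref{thm1}. It suffices to prove the single statement that, if $|p'-\half|>|p''-\half|$, then $T_{b_1,b_2,\dots}^{p'}\preceq_{st} T_{b_1,b_2,\dots}^{p''}$, since both monotonicity claims follow from it at once. Indeed, on $[0,\half]$ the map $p\mapsto|p-\half|$ is decreasing, so for $p_1<p_2$ in $[0,\half]$ we have $|p_1-\half|>|p_2-\half|$ and hence $T^{p_1}\preceq_{st}T^{p_2}$, giving the asserted increase; the case $p\in[\half,1]$ is identical with the roles reversed.

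Since $T^p:=T_{b_1,b_2,\dots}^{p}$ takes values in $\{1,2,\dots\}\cup\{\infty\}$, the order $T^{p'}\preceq_{st}T^{p''}$ is equivalent to the survival-function inequality $P(T^{p'}>t)\le P(T^{p''}>t)$ for every $t\ge 0$ (only integer $t$ matter). Fix such a $t$ and write the survival event in terms of the path,
\[
\{T^{p}>t\}=\{\,|S_n^{p}|<b_n \text{ for all } 1\le n\le t\,\}.
\]
Accordingly, define $f_t:R^\infty\to R$ by $f_t(x_0,x_1,\dots)=\mathbf 1[\,x_n<b_n \text{ for all } 1\le n\le t\,]$, so that $P(T^{p}>t)=Ef_t\big((|S_n^{p}|)_{n\ge 0}\big)$. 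This $f_t$ depends on only finitely many coordinates, is bounded and Borel measurable, and is decreasing in the coordinatewise order: enlarging any $x_n$ can only push a coordinate past its threshold $b_n$, which can only destroy membership in the event. Equivalently, $-f_t$ is a bounded increasing measurable function on $R^\infty$.

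Now I apply Theorem \ref{thm1}: the hypothesis $|p'-\half|>|p''-\half|$ yields $(|S_n^{p''}|)_{n\ge 0}\preceq_{st}(|S_n^{p'}|)_{n\ge 0}$. Testing this order against the bounded increasing function $-f_t$ gives $E[-f_t((|S_n^{p''}|)_n)]\le E[-f_t((|S_n^{p'}|)_n)]$, that is, $P(T^{p''}>t)\ge P(T^{p'}>t)$. As $t$ was arbitrary, this is precisely $T^{p'}\preceq_{st}T^{p''}$, completing the argument.

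The proof is essentially bookkeeping once the right functional is identified, so I do not anticipate a serious obstacle. The only points demanding care are orienting the three order-reversals consistently (the indexing convention through $|p-\half|$, the monotonicity direction of the functional $f_t$, and the survival-function form of $\preceq_{st}$), and confirming that the characterization $T^{p'}\preceq_{st}T^{p''}\iff P(T^{p'}>t)\le P(T^{p''}>t)$ correctly incorporates the event $\{T=\infty\}$ of never crossing the boundary, which it does via $P(T=\infty)=\lim_{t\to\infty}P(T>t)$.
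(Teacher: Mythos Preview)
Your proposal is correct and follows essentially the same approach as the paper: both arguments express $P(T^p>n)$ as the expectation of a bounded decreasing (hence its negative is increasing) functional of $(|S_i^p|)_{i\ge 0}$ and then apply Theorem~\ref{thm1}. The paper writes the test function as $f=-\mathbf{1}[\,x_i<b_i,\,i=1,\dots,n\,]\in\{-1,0\}$, while you use the indicator and negate afterward, but this is purely cosmetic.
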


\begin{corollary}\label{cor2}
For integers $b_n \geq 0 \;(n\geq 1)$, let $\HT_{b_1,b_2,\dots}^{p}=\inf\{n \geq 1: |U_n^p|\geq b_n\}$.
Then $\HT_{b_1,b_2,\dots}^{p}$
is stochastically maximized over $p \in [0,1]$ by $p=\half$.
\end{corollary}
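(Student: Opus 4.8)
The plan is to prove the equivalent statement $\HT_{b_1,b_2,\dots}^{p} \preceq_{st} \HT_{b_1,b_2,\dots}^{1/2}$ for every $p\in[0,1]$, which is precisely what it means for the stopping time to be stochastically maximized at $p=\half$. Since $\HT^p$ is integer-valued (with the value $+\infty$ allowed when the boundary is never crossed), it suffices to compare the survival probabilities $P(\HT^p>m)$ for each integer $m\geq 1$. First I would rewrite each survival event as a non-crossing event,
\[
\{\HT_{b_1,b_2,\dots}^{p} > m\} = \{|U_n^p| < b_n \text{ for all } n=1,\dots,m\},
\]
and encode it through the function $f_m\colon R^\infty \to R$ defined by $f_m(x_0,x_1,\dots)=\prod_{n=1}^m \mathbf{1}\{x_n < b_n\}$, so that $P(\HT^p>m)=E f_m\,((|U_n^p|)_{n\geq 0})$.

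The key observation is that $f_m$ is bounded, Borel measurable, and \emph{decreasing}: raising any coordinate $x_n$ can only switch a factor $\mathbf{1}\{x_n<b_n\}$ from $1$ to $0$, never the reverse, so $f_m$ is non-increasing in each argument. Now Theorem \ref{thm2} gives $(|U_n^{1/2}|)_{n\geq 0} \preceq_{st} (|U_n^{p}|)_{n\geq 0}$, that is, $Eg((|U_n^{1/2}|)_{n\geq 0}) \leq Eg((|U_n^{p}|)_{n\geq 0})$ for every bounded increasing $g$. Applying this to the bounded increasing function $g=-f_m$ reverses the inequality, yielding $Ef_m((|U_n^{1/2}|)_{n\geq 0}) \geq Ef_m((|U_n^{p}|)_{n\geq 0})$, i.e.
\[
P(\HT_{b_1,b_2,\dots}^{1/2} > m) \;\geq\; P(\HT_{b_1,b_2,\dots}^{p} > m).
\]

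Since this holds for every integer $m\geq 1$ (and trivially for $m=0$, where both probabilities equal $1$ because $\HT\geq 1$ by definition), and since a stopping time taking values in $\{1,2,\dots\}\cup\{\infty\}$ satisfies $P(\HT>t)=P(\HT>\lfloor t\rfloor)$ for all real $t$, the tail comparison extends to all thresholds, giving $\HT^p \preceq_{st} \HT^{1/2}$ as required. I expect no genuine analytic obstacle here; the single point demanding care is the direction of the monotonicity — recognizing that the non-crossing indicator $f_m$ is decreasing in the trajectory, so that Theorem \ref{thm2}, stated for increasing test functions, translates into a \emph{reversed} inequality for the survival probabilities and hence into the asserted maximization at $p=\half$.
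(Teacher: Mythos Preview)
Your proof is correct and follows essentially the same route as the paper: the paper defines the bounded increasing function $f(x_0,x_1,\dots)=-1$ if $x_i<b_i$ for $i=1,\dots,n$ and $0$ otherwise, then applies Theorem~\ref{thm2} to conclude that $Ef((|U_n^p|)_{n\geq 0})=-P(\HT^p>n)$ is minimized at $p=\half$, which is exactly your argument with $g=-f_m$. The only cosmetic difference is that you start from the decreasing indicator $f_m$ and negate it, whereas the paper builds the increasing function directly.
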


\begin{corollary}\label{cor3}
For odd integers $b_n \geq 1 \;(n\geq 1)$, let $\widetilde{T}_{b_1,b_2,\dots}^p=\inf\{n \geq 1: |V_n^p|\geq b_n\}$.
Then $\widetilde{T}_{b_1,b_2,\dots}^p$
is stochastically increasing in $p \in [0,\half]$ and decreasing in $p \in [\half,1]$.
\end{corollary}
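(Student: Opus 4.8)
The plan is to deduce Corollary~\ref{cor3} directly from the process-level ordering in Theorem~\ref{thm3}, by writing the survival function of the stopping time as the expectation of a bounded \emph{decreasing} functional of the absolute-value path $(|V_k^p|)_{k\ge 0}$ and then tracking how this reverses the direction of the order. Throughout write $\widetilde{T}^p:=\widetilde{T}_{b_1,b_2,\dots}^p$. I first record the elementary fact that $\mathbf{U}\preceq_{st}\mathbf{V}$ is equivalent to $Eg(\mathbf{U})\ge Eg(\mathbf{V})$ for every bounded decreasing measurable $g:R^\infty\to R$, since $g$ is decreasing precisely when $c-g$ is increasing for a suitable constant $c$.

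Next, for a fixed $n\ge 1$ I would rewrite the survival event in terms of the path. Since $\widetilde{T}^p=\inf\{k\ge 1:|V_k^p|\ge b_k\}$, we have
\[
\{\widetilde{T}^p>n\}=\{\,|V_k^p|<b_k\ \text{for all } 1\le k\le n\,\},
\]
so that, defining the bounded measurable function $f_n:R^\infty\to R$ by $f_n(y_0,y_1,\dots)=\mathbf{1}\{y_k<b_k,\ 1\le k\le n\}$, one has $P(\widetilde{T}^p>n)=E f_n\big((|V_k^{p}|)_{k\ge0}\big)$. Because each factor $\mathbf{1}\{y_k<b_k\}$ is nonincreasing in $y_k$ and these factors are multiplied together, $f_n$ is a bounded decreasing function of its coordinates (the exact values of the thresholds, and in particular their being odd integers $\ge 1$, play no role here beyond ensuring a well-defined event). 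Applying Theorem~\ref{thm3}, whenever $|p'-\half|>|p''-\half|$ we have $(|V_k^{p''}|)_{k\ge0}\preceq_{st}(|V_k^{p'}|)_{k\ge0}$, whence by the decreasing-function characterization
\[
P(\widetilde{T}^{\,p''}>n)=E f_n\big((|V_k^{p''}|)_{k\ge0}\big)\ \ge\ E f_n\big((|V_k^{p'}|)_{k\ge0}\big)=P(\widetilde{T}^{\,p'}>n),\qquad n\ge 1.
\]

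Since this holds for every finite $n$, it gives $\widetilde{T}^{\,p'}\preceq_{st}\widetilde{T}^{\,p''}$ whenever $|p'-\half|>|p''-\half|$; that is, the stopping time is stochastically \emph{smaller} for the parameter farther from $\half$ (allowing the value $+\infty$ causes no difficulty, as the stochastic order of $\{1,2,\dots,\infty\}$-valued variables is determined by the probabilities $P(\cdot>n)$ for finite $n$). Unwinding this yields the monotonicity claim: for $0\le p_1<p_2\le\half$ one has $|p_1-\half|>|p_2-\half|$, so taking $(p',p'')=(p_1,p_2)$ gives $\widetilde{T}^{p_1}\preceq_{st}\widetilde{T}^{p_2}$, i.e. stochastic increase on $[0,\half]$; for $\half\le p_1<p_2\le1$ one has $|p_2-\half|>|p_1-\half|$, so taking $(p',p'')=(p_2,p_1)$ gives $\widetilde{T}^{p_2}\preceq_{st}\widetilde{T}^{p_1}$, i.e. stochastic decrease on $[\half,1]$, which is exactly the assertion. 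The heavy lifting is done entirely by Theorem~\ref{thm3}, so I do not expect any genuine analytic obstacle; the only point demanding care is the bookkeeping of the direction reversal, namely that a \emph{larger} absolute-value path produces a \emph{smaller} first-passage time, which is what the passage through the decreasing functional $f_n$ encodes.
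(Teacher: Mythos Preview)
Your proof is correct and follows essentially the same argument as the paper: the paper proves Corollary~\ref{cor1} by applying Theorem~\ref{thm1} to the bounded increasing function $f(x_0,x_1,\dots)=-\mathbf{1}\{x_i<b_i,\,1\le i\le n\}$ and then states explicitly that the same argument applies to Corollaries~\ref{cor2}--\ref{cor4}. Your use of the decreasing indicator $f_n$ together with the observation that $\preceq_{st}$ reverses for decreasing functionals is the same device up to a sign.
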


\begin{corollary}\label{cor4}
For real $b_n\geq 0\;(n\geq 1)$, let $\overline{T}_{b_1,b_2,\dots}^\mu=\inf\{n\geq 1:
|W_n^\mu|\geq b_n\}$. Then $\overline{T}_{b_1,b_2,\dots}^\mu$ is stochastically increasing in
$\mu \in (-\infty,0]$ and decreasing in $\mu \in [0,\infty)$.
\end{corollary}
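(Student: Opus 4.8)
The plan is to deduce the corollary directly from Theorem~\ref{thm4} by rewriting the event $\{\overline{T}^\mu \le k\}$ as an increasing functional of the whole trajectory $(|W_n^\mu|)_{n\ge 0}$, and then invoking the one-dimensional characterization of stochastic order through tail probabilities. Throughout I would suppress the fixed subscript $b_1,b_2,\dots$ and simply write $\overline{T}^\mu$, and I would treat the two monotonicity statements as a single assertion: for $|\mu'|<|\mu''|$ the walk with the larger drift magnitude should cross the thresholds sooner.

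First I would fix $\mu',\mu''$ with $|\mu'|<|\mu''|$, so that Theorem~\ref{thm4} gives $(|W_n^{\mu'}|)_{n\ge0}\preceq_{st}(|W_n^{\mu''}|)_{n\ge0}$. Writing $\mathbf{U}=(|W_n^{\mu'}|)_{n\ge0}$ and $\mathbf{V}=(|W_n^{\mu''}|)_{n\ge0}$, the definition of $\preceq_{st}$ then supplies $Ef(\mathbf{U})\le Ef(\mathbf{V})$ for every bounded increasing measurable $f:R^\infty\to R$. The central step is to produce the right test functions. For each fixed $k\ge1$ I would consider
\[
g_k(x_0,x_1,\dots)=\mathbf{1}\{\exists\,n\in\{1,\dots,k\}:\ x_n\ge b_n\}.
\]
This $g_k$ is bounded, depends on only finitely many coordinates (hence is Borel measurable on $R^\infty$), and is increasing, since if $x_i\le y_i$ for all $i$ then any index $n$ with $x_n\ge b_n$ also satisfies $y_n\ge b_n$, whence $g_k(\mathbf{x})\le g_k(\mathbf{y})$. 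The key identity is $g_k(|W_0^\mu|,|W_1^\mu|,\dots)=\mathbf{1}\{\overline{T}^\mu\le k\}$, because $\overline{T}^\mu\le k$ precisely when $|W_n^\mu|\ge b_n$ for some $n\le k$. Applying the inequality above with $f=g_k$ then yields $P(\overline{T}^{\mu'}\le k)\le P(\overline{T}^{\mu''}\le k)$ for every $k$.

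Finally I would appeal to the standard fact that, for random variables valued in $\{1,2,\dots\}\cup\{\infty\}$, one has $T_1\preceq_{st}T_2$ if and only if $P(T_1\le k)\ge P(T_2\le k)$ for all $k$ (the finite tails determine the law, including the atom at $\infty$). Thus $P(\overline{T}^{\mu'}\le k)\le P(\overline{T}^{\mu''}\le k)$ for all $k$ is exactly $\overline{T}^{\mu''}\preceq_{st}\overline{T}^{\mu'}$: the larger-$|\mu|$ walk stops sooner. Specializing to $0\le\mu'<\mu''$ gives the decrease on $[0,\infty)$, and specializing to $\mu'<\mu''\le0$ (where now $|\mu'|>|\mu''|$, so the two indices exchange roles and $\overline{T}^{\mu'}\preceq_{st}\overline{T}^{\mu''}$) gives the increase on $(-\infty,0]$.

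The only point requiring genuine care is the verification that $g_k$ is a bona fide bounded increasing measurable function on $R^\infty$ and that the tail-probability reduction is legitimate; once these are in place the argument is a direct transcription of Theorem~\ref{thm4}. I do not expect a substantial obstacle here, which is precisely why the statement is offered as a corollary rather than a theorem; the analogous Corollaries~\ref{cor1}--\ref{cor3} would follow by the same template, using Theorems~\ref{thm1}--\ref{thm3} in place of Theorem~\ref{thm4}.
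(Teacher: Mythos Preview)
Your proof is correct and follows essentially the same approach as the paper. The paper uses the test function $f(x_0,x_1,\dots)=-\mathbf{1}\{x_i<b_i,\ i=1,\dots,n\}$, which differs from your $g_k$ only by an additive constant and the choice of strict versus weak inequality; in both cases one checks the function is bounded, increasing, and measurable, applies Theorem~\ref{thm4}, and reads off the tail-probability inequality $P(\overline{T}^{\mu'}\le k)\le P(\overline{T}^{\mu''}\le k)$ for $|\mu'|<|\mu''|$.
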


To see that Corollary \ref{cor1} follows from Theorem \ref{thm1}, note that for $n\geq 1$,
the function $f:R^{\infty} \to R$ given by
\begin{align*}
f(x_0,x_1,\dots)=
\begin{cases}
-1, &\text{if $x_i< b_i, i=1,\dots, n$;}\\
0, &\text{otherwise;}
\end{cases}
\end{align*}
is  bounded increasing. By Theorem \ref{thm1}, 
\[
Ef(|S_0^p|, |S_1^p|,\dots)=-P(|S_i^p|< b_i, i=1,\dots, n)=-P(T_{b_1,b_2,\dots}^p>n)
\]
is decreasing in $p \in [0,\half]$ and increasing in $p \in [\half,1]$, implying that
$T_{b_1,b_2,\dots}^p$ is stochastically increasing in $p \in [0, \half]$ and decreasing in $p \in [\half,1]$.
The same argument also applies to  Corollaries \ref{cor2}--\ref{cor4}.
The following stronger version of Corollary \ref{cor2} requires a 
somewhat long and delicate argument.

\begin{theorem}\label{thm5}
For integers $b_n \geq 0 \;(n\geq 1)$, 
  $\HT_{b_1,b_2,\dots}^p$
is stochastically increasing in $p \in [0,\half]$ and decreasing in $p \in [\half,1]$.
\end{theorem}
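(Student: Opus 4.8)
The plan is to fix the horizon $n$ and the boundary $(b_1,\dots,b_n)$ and show that $P(\HT_{b_1,b_2,\dots}^{p}>n)$ is nonincreasing in $p\in[\half,1]$; by the symmetry $(|U_n^p|)_{n\ge0}\overset{d}{=}(|U_n^{1-p}|)_{n\ge0}$ (hence $\HT^{p}\overset{d}{=}\HT^{1-p}$) this yields the full ``increasing on $[0,\half]$, decreasing on $[\half,1]$'' statement. First I would condition on the fair initial sign $X_0\in\{+1,-1\}$. For $0\le m\le n$ let $\phi_m^{p}(x)$ denote the probability that a walk located at $x$ at time $m$ satisfies $|U_i^{p}|<b_i$ for all $m<i\le n$, with the convention $\phi_m^{p}\equiv0$ outside the strip $(-b_m,b_m)$ for $m\ge1$ and $\phi_n^{p}(x)=\mathbf 1[|x|<b_n]$. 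These satisfy the backward recursion
\[
\phi_m^{p}(x)=\mathbf 1[|x|<b_m]\bigl(p\,\phi_{m+1}^{p}(x+1)+(1-p)\,\phi_{m+1}^{p}(x-1)\bigr),
\]
and, since $|U_0^p|=1$, we have $P(\HT^{p}>n)=\half\bigl(\phi_0^{p}(1)+\phi_0^{p}(-1)\bigr)$. Because the strip is symmetric about $0$, reflection gives $\phi_m^{p}(-x)=\phi_m^{1-p}(x)$, so it suffices to prove that $\Phi(p):=\phi_0^{p}(1)+\phi_0^{p}(-1)$ is nonincreasing on $[\half,1]$.

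It is important to note that, unlike Theorems \ref{thm1}--\ref{thm4}, a process-level comparison between two values $p''<p'$ lying on the same side of $\half$ is not available here: Theorem \ref{thm2} only compares $|U^{p}|$ with the symmetric case $p=\half$, so Strassen's theorem cannot be applied to the whole path. To see the obstruction, view $R_n:=|U_n^{p}|$ as a reflected walk decomposed into excursions away from $0$: the first excursion is positive or negative with probability $\half$ each (fair, because $X_0$ is fair), while after each return to $0$ a positive excursion (up-drift $p$) is started with probability $p$ and a negative one (up-drift $1-p$) with probability $1-p$. Coupling the $p'$- and $p''$-walks ($\half\le p''<p'$) by a monotone coupling of the excursion signs and of the increments within matched same-sign excursions orders $R^{p'}\ge R^{p''}$ in every configuration \emph{except} when both walks are simultaneously in negative excursions: there the relevant up-drifts are $1-p'\le 1-p''$, so the smaller parameter produces the taller excursion and the ordering reverses. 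This reversal is exactly why a naive coupling breaks down.

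To get around this I would prove the monotonicity of $\Phi$ directly, by downward induction on $m$ (from $m=n$ to $m=0$), propagating not the single target inequality but a self-reinforcing family of inequalities for the functions $x\mapsto\phi_m^{p}(x)$. The natural candidates to carry jointly are the reflection identity $\phi_m^{p}(-x)=\phi_m^{1-p}(x)$ (automatic), the $p$-monotonicity on $[\half,1]$ of the symmetric sums $\phi_m^{p}(x)+\phi_m^{p}(-x)$, and sign-controlled bounds on the spatial increments $\phi_m^{p}(x+1)-\phi_m^{p}(x-1)$ together with the antisymmetric combinations $\phi_m^{p}(x)-\phi_m^{p}(-x)$. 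Differentiating the recursion in $p$ produces, besides the $p$- and $(1-p)$-weighted derivatives of the next layer, the explicit drift terms $\phi_{m+1}^{p}(x+1)-\phi_{m+1}^{p}(x-1)$; the plan is to choose the invariant so that the signs of these difference terms are precisely what the induction already controls, making the family closed under the one-step averaging operator and yielding $\Phi'(p)\le 0$ at $m=0$.

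The main obstacle is closing this induction: the target inequality alone is not preserved by the recursion, so the real work is to identify the exact set of companion monotonicity, gradient, and symmetry inequalities that is stable under the drift-weighted operator. The delicate point is the same negative-excursion reversal seen above, now reappearing as a sign change of the spatial gradient $\phi_{m+1}^{p}(x+1)-\phi_{m+1}^{p}(x-1)$ near the boundary: one must show that the extra weight the $p'$-walk places on outward moves from the positive side dominates, in the averaged quantity $\Phi$, the adverse contribution coming from the negative excursions — a cancellation that is invisible at the level of a single excursion and emerges only after summing over both initial signs. Verifying that this cancellation survives every step of the recursion is what makes the argument long and delicate.
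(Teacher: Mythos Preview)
Your setup is correct, but the proposal stops exactly where the proof begins: you never identify the ``self-reinforcing family of inequalities'' that the backward recursion is supposed to preserve. You yourself flag this as ``the real work,'' and without a concrete invariant there is no argument, only a wish. It is not at all clear that any finite list of pointwise monotonicity/gradient/symmetry bounds on $\phi_m^{p}(\cdot)$ closes under the one-step operator for an \emph{arbitrary} boundary $(b_1,\dots,b_n)$; the sign reversal you diagnose near the negative boundary is a real obstruction, and simply hoping that symmetrisation over $x\mapsto -x$ cancels it at every step is not a proof.

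The paper sidesteps your obstacle by changing the induction variable and the induction hypothesis. Instead of backward induction in $m$ for a fixed boundary, it does \emph{forward} induction on the horizon $n$, and the hypothesis is that $P(|U_i^p|\le c_i,\,i\le n)$ is increasing in $p\in[0,\half]$ \emph{for every} choice of $(c_1,\dots,c_n)$. The payoff of quantifying over all boundaries is a two-sided decomposition trick: the survival probability up to time $m+1$ is written both as $P(|U_i^p|\le c_i',\,i\le m)+\gamma$ (with $c_m'=c_m-2$) and as $P(|U_i^p|\le c_i,\,i\le m)-\delta$, where $\gamma,\delta$ are explicit combinations of terms of the form $(pq)^{\ell}(p^{k}+q^{k})$. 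A short lemma (the paper's Lemma~\ref{lem3}(i)) shows that $\frac{d}{dp}(pq)^{\ell'}(p^{m}+q^{m})<0$ forces $\frac{d}{dp}(pq)^{\ell}(p^{m'}+q^{m'})<0$ whenever $\ell\le\ell'$ and $m\le m'$; this lets one choose, at each $p$, whichever of the two decompositions makes the boundary correction have the right sign, so the derivative of the survival probability is nonnegative by the induction hypothesis applied to two different boundaries. That dichotomy is the missing idea in your plan: rather than carrying a complicated invariant through a fixed-boundary recursion, the paper exploits the freedom to vary the boundary in the induction hypothesis and a sharp algebraic comparison of the $(pq)^{\ell}(p^{k}+q^{k})$ building blocks.
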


The proofs of Theorems \ref{thm1}--\ref{thm4} are given in Section 2 and that of Theorem  \ref{thm5}
 is given in Section 3.
We close this section with a number of remarks discussing the implications of the above results
and an extension to Brownian motion.

\begin{remark}
Theorems \ref{thm1}--\ref{thm4} are proved by verifying the condition of Lemma \ref{lem2} in Section 2.
Although the random walks $(S_n^p)_{n\geq 0}, (U_n^p)_{n \geq 0}$ and $(V_n^p)_{n\geq 0}$ are 
closely related, compared with Theorems \ref{thm1} and \ref{thm3} for $(|S_n^p|)_{n\geq 0}$ and
$(|V_n^p|)_{n\geq 0}$, 
we are unable to establish the stronger version of Theorem \ref{thm2}  that 
$(|U_n^{p''}|)_{n\geq 0} \preceq_{st} (|U_n^{p'}|)_{n\geq 0}$ for
$p', p'' \in [0,1]$ satisfying $|p'-\half|>|p''-\half|$ 
(which would imply Theorem \ref{thm5}). This is due to the fact that 
the condition of Lemma \ref{lem2}  does not hold in this case. 
Furthermore, unlike $(|S_n^p|)_{n\geq 0}, (|V_n^p|)_{n\geq 0}$
and $(|W_n^\mu|)_{n\geq 0}$, $(|U_n^{p}|)_{n\geq 0}$ is not a Markov chain for $\half \neq p \in (0,1)$.
For more detail, see Remark \ref{rem7} in Section 2.
\end{remark}

\begin{remark}
The stochastic ordering properties for the stopping times as described in Corollaries \ref{cor1}--\ref{cor4}
follow immediately from Theorems \ref{thm1}--\ref{thm4}. By contrast, in the literature, 
these properties are derived by more delicate arguments based on induction (similar
to the proof of Theorem \ref{thm5}). Below we review some of the works in the literature. 
\end{remark}

\begin{remark}
The stopping time $T_{b_1,b_2,\dots}^p$ as defined in Corollary \ref{cor1} arises  in 
Bayesian sequential testing for Bernoulli trials (\cite{MR, SW}) and the Kiefer-Weiss problem
(\cite{KW, NNF, Weiss}). 
Given  $X_1^p,X_2^p,\dots$ observed sequentially (with unknown $p$), 
Moriguti and Robbins \cite{MR} and Simons and Wu \cite{SW} considered testing the null hypothesis $p\leq \half$ versus the alternative $p>\half$ subject to
a unit cost for each observed random variable and an additional cost $C \;|p-\half|$ if the wrong
hypothesis is chosen where $C>0$ is constant. They adopted a Bayesian approach with a Beta prior 
Beta$(c,c')$ imposed
on $p$ 
 with $c>0$ and $c'>0$. It follows from
their results that
for the symmetric case $c=c'$, the Bayesian optimal stopping rule is 
$T_{b_1,b_2,\dots}^p$  where $b_1,b_2,\dots$ depend on $C$ and 
$c$ and can be found by backward induction.
The related Kiefer-Weiss problem
in the Bernoulli case is to find, for given $0<p_1<p_2<1$,  a sequential test of $p=p_1$ versus $p=p_2$, with
preassigned type I and type II error probabilities $\alpha>0$ and $\beta>0$, and which
minimizes the maximum expected sample size (the maximum taken with respect to all $p \in [0,1]$).
For the symmetric case with $p_1+p_2=1$ and $\alpha=\beta$,  Weiss \cite{Weiss} showed that the stopping rule of the
minimax solution is exactly
$T_{b_1,b_2,\dots}^p$ for some $b_1,b_2,\dots$. 
Although he claimed only that $E(T_{b_1,b_2,\dots}^p)$
is maximized over $p \in [0,1]$ by $p=\half$, his induction-based argument actually establishes
the stronger result as stated in Corollary \ref{cor1}. 
Furthermore, it may be worth noting that Corollary \ref{cor1} can be 
strengthened with the usual stochastic order replaced
by the likelihood ratio order provided $\{b_n\}$ is decreasing (i.e. $0\leq b_{n+1}\leq b_n$ for all $n$). 
This result is stated in the following theorem whose proof is given in Section 3.

\begin{theorem}\label{thm6}
 Let $b_n\geq 0$ be non-negative integers satisfying 
$0\leq b_{n+1}\leq b_n$ for all $n\geq 1$. Then in terms of the likelihood ratio order, 
$T_{b_1,b_2,\dots}^{p}$ is increasing in $p \in [0, \half]$ and decreasing in $p \in [\half, 1]$.
\end{theorem}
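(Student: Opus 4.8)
The plan is to bypass the process-level comparison used for Theorems \ref{thm1}--\ref{thm4} and instead reduce everything to a monotone-likelihood-ratio (total positivity) statement about a single auxiliary Markov chain, which I then propagate through the passage time by induction on $n$. By the symmetry $(|S_n^p|)_{n\ge0}\overset{d}{=}(|S_n^{1-p}|)_{n\ge0}$, the two assertions (increasing on $[0,\half]$ and decreasing on $[\half,1]$) are equivalent, so it suffices to fix $p_1<p_2\le\half$ and show $T_{b_1,b_2,\dots}^{p_1}\preceq_{\mathrm{lr}}T_{b_1,b_2,\dots}^{p_2}$; writing $A_n^p=P(T_{b_1,b_2,\dots}^p=n)$, this is the family of inequalities $A_m^{p_1}A_n^{p_2}\ge A_m^{p_2}A_n^{p_1}$ for all $m<n$. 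The starting observation is that $(|S_n^p|)_{n\ge0}$ is a birth--death chain on $\{0,1,2,\dots\}$: conditioning on the absolute-value path and using that a completed excursion and its reflection carry equal weight $p^\ell q^\ell$, the conditional probability that $S_n>0$ given $|S_n|=k$ equals $p^{k}/(p^{k}+q^{k})$, so from a state $k\ge1$ the chain moves to $k+1$ with probability $u_p(k)=(p^{k+1}+q^{k+1})/(p^{k}+q^{k})$ and to $k-1$ otherwise, while from $0$ it moves to $1$ with probability one. Two monotonicities drive the whole argument: $u_p(k)$, being the average of $p,q$ with weights $p^k,q^k$, is nondecreasing in $k$; and since it depends on $p$ only through $\{p,q\}$, it is nonincreasing in $p$ on $[0,\half]$. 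Thus the more balanced walk $p_2$ is, at every level, less inclined to move up.

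The core is an induction carrying a single total-positivity invariant. I linearly order the augmented state space at time $n$ by declaring $\{T=1\}\prec\{T=2\}\prec\cdots\prec\{T=n\}\prec(\text{alive at }b_n{-}1)\prec\cdots\prec(\text{alive at }0)$; that is, "stopped earlier" is smaller than "stopped later," which is smaller than "still alive," and among alive states a higher position (absorbed sooner, hence a smaller future passage time) is placed lower. Let $\Phi_n^p$ be the sub-probability vector assigning $A_m^p$ to $\{T=m\}$ for $m\le n$ and $N_n^p(k)=P(T^p>n,\,|S_n^p|=k)$ to the alive-at-$k$ coordinate. The invariant to be propagated is that $\Phi_n^{p_2}$ dominates $\Phi_n^{p_1}$ in the likelihood-ratio sense along this order. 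The base case $n=0$ is trivial, both vectors equalling $\delta_0$ (and the first transition out of $0$ is free of $p$), and reading off the $\{T=m\}$ coordinates of the invariant as $n\to\infty$ yields exactly the required inequalities $A_m^{p_1}A_n^{p_2}\ge A_m^{p_2}A_n^{p_1}$.

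For the inductive step I pass from $\Phi_n^p$ to $\Phi_{n+1}^p$ by the one-step kernel that freezes the already-stopped coordinates, sends an alive state $k$ to the new coordinate $\{T=n+1\}$ with the absorption probability $s_p(k)$ and to the alive coordinates $k\pm1$ otherwise, and then truncates to positions below $b_{n+1}$. Here the hypothesis $b_{n+1}\le b_n$ is essential: it keeps the alive region a down-set and places the freshly absorbed mass exactly below the surviving states, so that $\{T=n+1\}$ occupies its correct slot in the order and $s_p(k)$ is nondecreasing in $k$. Preservation of likelihood-ratio domination under this step rests on two properties of the explicit kernel: for each fixed $p$ it is monotone in the $\mathrm{TP}_2$ sense relative to the chosen order, and, source state by source state, the $p_2$-kernel dominates the $p_1$-kernel in the likelihood-ratio order (more mass on the "larger," i.e.\ lower and later, targets), the latter being immediate from the two monotonicities of $u_p(k)$ above. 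Granting these, a standard preservation principle---likelihood-ratio order is preserved by a likelihood-ratio-ordered pair of $\mathrm{TP}_2$ kernels---closes the induction.

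The main obstacle is precisely the verification of these two kernel conditions with the stop coordinate inserted and the truncation in force. Stripped to essentials this is a finite collection of $2\times2$ determinant inequalities among $u_p(k)$, $1-u_p(k)$ and $s_p(k)$ at adjacent source states straddling the level $b_{n+1}$, and it is here that the decreasing-boundary hypothesis and the parity of $|S_n^p|$ (which confines the chain to every other integer and shifts the effective absorption level) must be handled carefully. Phrasing the invariant through determinants rather than literal ratios also sidesteps the division-by-zero caused by the vanishing of $A_n^p$ at parity-excluded times, so that the conclusion $T_{b_1,b_2,\dots}^{p_1}\preceq_{\mathrm{lr}}T_{b_1,b_2,\dots}^{p_2}$ holds without support caveats.
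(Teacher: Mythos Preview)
Your strategy is genuinely different from the paper's, and the idea of carrying a TP$_2$ invariant through an augmented (stopped $+$ alive) state space is natural. But the step you call ``a standard preservation principle'' is not a theorem, and this is where the argument breaks.

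You claim: if $\mu_1\preceq_{\mathrm{lr}}\mu_2$, each kernel $K_{p_i}$ is TP$_2$ in the chosen order, and $K_{p_1}(x,\cdot)\preceq_{\mathrm{lr}}K_{p_2}(x,\cdot)$ for every source state $x$, then $\mu_1 K_{p_1}\preceq_{\mathrm{lr}}\mu_2 K_{p_2}$. This fails in general. On sources $\{0,1\}$ and targets $\{0,1,2\}$, take
\[
K_1(0,\cdot)=(1,0,0),\quad K_1(1,\cdot)=(0,0,1),\quad K_2(0,\cdot)=(0,1,0),\quad K_2(1,\cdot)=(0,0,1),
\]
and $\mu_1=\mu_2=(\tfrac12,\tfrac12)$. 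Both kernels are TP$_2$, the pointwise LR domination holds, yet $\mu K_1=(\tfrac12,0,\tfrac12)$ and $\mu K_2=(0,\tfrac12,\tfrac12)$ are not LR-ordered. The result that \emph{is} standard (Karlin's basic composition formula) handles a \emph{single} TP$_2$ kernel applied to two LR-ordered inputs; the two-kernel version you need requires extra structure you have not identified. So the induction does not close from the two kernel properties alone: you would have to verify directly all the cross-determinants linking $A_m^{p_i}$, $A_{n+1}^{p_i}$ and $N_{n+1}^{p_i}(k)$, which is precisely the work you defer in your last paragraph and which is, in effect, the whole proof.

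For comparison, the paper's argument is short and completely explicit, with no induction on $n$ and no augmented state space. It writes
\[
P(T^p_{b_1,b_2,\dots}=n)=\sum_{k\in\sigma}|\mathbb S^k_{c_1,\dots,c_{n-1}}|\,(pq)^{(n-k)/2}(p^k+q^k),
\]
a sum over the possible exit levels $k$ at time $n$. The decreasing-boundary hypothesis (together with parity) forces every exit level at time $n$ to be strictly larger than every exit level at the next support point $n'$. One then applies the elementary inequality of Lemma~\ref{lem3}(ii), which says that $(p'q')^{\ell}\big((p')^{m}+(q')^{m}\big)\big/\big((pq)^{\ell}(p^{m}+q^{m})\big)$ moves monotonically as $(\ell,m)$ varies, to bound $\rho(n,p,p')$ from above and $\rho(n',p,p')$ from below by a common quantity. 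This is exactly the ``$2\times2$'' content you postpone, but carried out once and for all at the level of the hitting-time distribution rather than propagated through a Markov step.
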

\end{remark} 

\begin{remark}
Weiss \cite{Weiss} also considered the Kiefer-Weiss problem for the normal case where
$Z_1^\mu, Z_2^\mu,\dots$ are observed sequentially with an unknown mean $\mu$. 
To test the null hypothesis $\mu=\mu_1$
versus the alternative $\mu=\mu_2$ with preassigned type I and type II error probabilities
$\alpha$ and $\beta$, Weiss showed for the symmetric case $\mu_1+\mu_2=0$ and $\alpha=\beta$
that the stopping rule of the minimax solution is $\overline{T}_{b_1,b_2,\dots}^{\mu}$ as defined in
Corollary \ref{cor4} for some $b_1,b_2,\dots$. He referred to \cite{Anderson2} for a proof that
$\overline{T}_{b_1,b_2,\dots}^{\mu}$ is stochastically maximized over $\mu \in R$ by $\mu=0$
(with the help of a useful probability inequality for Gaussian processes in 
\cite[Corollary 5]{Anderson1}).
\end{remark}

\begin{remark}
The classical gambler's ruin problem is studied in detail in \cite[Chapter XIV]{feller}, where 
for integers $0<z<a$, the gambler
plays an adversary with initial capitals $z$ and $a-z$, respectively. At each round,
the gambler wins or loses one dollar with probabilities $p$ and $q=1-p$, respectively. The game
continues until one of the players is ruined. The distribution of the duration of the game
depends on $p, a$ and $z$ and  admits
a complicated closed-form expression; see \cite[(5.7) and (5.8), pp.353--354]{feller}.
It is shown in \cite{PR}   that   the duration is stochastically maximized over $p\in [0,1]$ by $p=\half$, if 
 $a$ is  even and $z=\frac{a}{2}$ (i.e. the  initial capitals of the two players are equal). It is
 readily seen that the duration in this 
 case is $T_{b_1,b_2,\dots}^p$ with $b_n=\frac{a}{2}$ for all $n$.
Furthermore, it is noted in \cite{HY} that this result can be strengthened with the usual stochastic order replaced by the likelihood
ratio order. More precisely, in terms of the likelihood ratio order,  $T_{b_1,b_2,\dots}^p$ 
with $b_n=\frac{a}{2}$ for all $n$ is increasing in $p \in [0,\half]$ and decreasing in $p \in [\half, 1]$.
See also
\cite{PR1} for related results. If the pair of initial capitals 
$(z,a-z)$ is assumed to be either
$(\frac{a}{2}-1,\frac{a}{2}+1)$ or $(\frac{a}{2}+1,\frac{a}{2}-1)$
with equal probabilities (i.e. $z=\frac{a}{2} \pm 1$ each with probability $\half$), 
then the duration  is $\HT_{b_1,b_2,\dots}^p$ with $b_n=\frac{a}{2}$ for all $n$. By Corollary \ref{cor2}, the duration is stochastically maximized 
over $p \in [0,1]$ by $p= \half$.
 On the other hand,  if  $a$ is odd,  an analogous case is to assume that
 the
pair of initial capitals 
$(z,a-z)$ is either
$(\frac{a-1}{2},\frac{a+1}{2})$ or $(\frac{a+1}{2},\frac{a-1}{2})$
with equal probabilities. That is, the initial state $z$ is either 
$\frac{a-1}{2}$ or $\frac{a+1}{2}$ each with probability $\half$.
It is conjectured in \cite[Remark 4]{HY} that 
the duration in this case is stochastically maximized over $p \in [0,1]$ by $p=\half$.
In fact, the duration is $\widetilde{T}_{b_1,b_2,\dots}^p$ with $b_n=a$ for all $n$.
To see this,  consider a new (equivalent) version of  the problem as follows. 
At each round the gambler wins or loses 2 dollars with probabilities $p$ and $q$. The initial
capitals of the two players are $z$ and $2a-z$ where  $a$ is odd and $z$ is either $a-1$ or $a+1$ with equal probabilities. It is readily seen that for the original and new versions of the problem,
the duration is the same, which
 is exactly
$\widetilde{T}_{b_1,b_2,\dots}^p$ with $b_n=a$ for all $n$. By Corollary \ref{cor3}, the duration is
stochastically maximized over $p \in [0,1]$ by $p=\half$.  (It is this odd $a$ case above that motivated
us to consider the random walk $(V_n^p)_{n\geq 0}$.)
\end{remark}

\begin{remark}
By Theorem \ref{thm4}, we have $(|W_n^{\mu'}|)_{n\geq 0} \preceq_{st} (|W_n^{\mu''}|)_{n\geq 0}$
for $0\leq \mu'<\mu''$. It follows by Strassen's theorem on stochastic domination 
 (\emph{cf.} \cite{Lindvall, Strassen}) that  
  there exists  a coupling $((\Delta_n)_{n\geq 0}, (\Gamma_n)_{n\geq 0}))$ 
  of $(|W_n^{\mu'}|)_{n\geq 0}$ and  $(|W_n^{\mu''}|)_{n\geq 0}$
  such that
  $\Delta_n \leq \Gamma_n$ for all $n$ a.s. (More precisely, 
   $(\Delta_n)_{n\geq 0}$ and $(\Gamma_n)_{n\geq 0}$
  are defined on a common probability space such that $(\Delta_n)_{n\geq 0}\overset{d}{=}
  (|W_n^{\mu'}|)_{n\geq 0}$, $(\Gamma_n)_{n\geq 0}\overset{d}{=}
  (|W_n^{\mu''}|)_{n\geq 0}$, and $\Delta_n \leq \Gamma_n$ for all $n$ a.s.)
We can extend Theorem \ref{thm4} to Brownian motion by a standard limiting argument (\emph{cf.} \cite{Billingsley}).
Let $(B_t)_{t\geq 0}$ be standard Brownian motion and let $B_t^{\mu}=B_t+\mu t$ be Brownian motion
with drift parameter $\mu$. For $0\leq \mu'<\mu''$, we claim that
$(|B_t^{\mu'}|)_{0\leq t \leq 1} \preceq_{st} (|B_t^{\mu''}|)_{0\leq t \leq 1}$.
Here the (Polish) space $C[0,1]$ is endowed with the partial 
ordering $\leq^*$ defined by 
\begin{align*}
``x \leq^* y\;\; \text{if and only if $x(t)\leq y(t)$ for all
$t \in [0,1]$.''}
\end{align*}
Let $\MM_n$ be  a continuous mapping from $R^n$ to $C[0,1]$ such that 
$\MM_n\big[(x_i)_{1\leq i\leq n}\big]=x \in C[0,1]$ where $x$ is a piecewise linear function given by
$x(0)=0,\; x(\frac{i}{n})=x_i, i=1,\dots, n$, and
\begin{align*}
x(t)&=(i-nt)\; x(\frac{i-1}{n})+(nt-i+1)\; x(\frac{i}{n})\; 
\;\text{for $\frac{i-1}{n} <t <\frac{i}{n},\;\; i=1,\dots, n$}.
\end{align*}
Since $(\frac{1}{\sqrt{n}} W_i^{\mu/\sqrt{n}})_{1\leq i \leq n}\overset{d}{=}
(B_{i/n}^{\mu})_{1\leq i\leq n}$, we have
$(\frac{1}{\sqrt{n}} |W_i^{\mu/\sqrt{n}}|)_{1\leq i \leq n}\overset{d}{=}
(|B_{i/n}^{\mu}|)_{1\leq i\leq n}$ and
\begin{align}\label{eq09096}
\MM_n\big[(\frac{1}{\sqrt{n}} |W_i^{\mu/\sqrt{n}}|)_{1\leq i \leq n}\big]
\overset{d}{=}
\MM_n\big[(|B_{i/n}^{\mu}|)_{1\leq i\leq n}\big],
\end{align}
where both sides of (\ref{eq09096}) are random elements of $C[0,1]$. 
Moreover, as $n \to \infty$,
the right side of (\ref{eq09096}) converges a.s. to  $(|B_t^{\mu}|)_{0\leq t \leq 1}$,
implying that 
\begin{align}\label{eq09097}
\MM_n\big[(\frac{1}{\sqrt{n}} |W_i^{\mu/\sqrt{n}}|)_{1\leq i \leq n}\big]
\;\;\text{converges weakly to 
$(|B_t^{\mu}|)_{0\leq t \leq 1}$\;.}
\end{align}
By Theorem \ref{thm4}, for $0\leq \mu'<\mu''$,
$(\frac{1}{\sqrt{n}} |W_i^{\mu'/\sqrt{n}}|)_{1\leq i \leq n} \preceq_{st} 
(\frac{1}{\sqrt{n}} |W_i^{\mu''/\sqrt{n}}|)_{1\leq i \leq n}$, so 
there exists a coupling $\big((Y_i')_{1\leq i \leq n}, (Y_i'')_{1\leq i \leq n}\big)$
of $(\frac{1}{\sqrt{n}} |W_i^{\mu'/\sqrt{n}}|)_{1\leq i \leq n}$ and  
$(\frac{1}{\sqrt{n}} |W_i^{\mu''/\sqrt{n}}|)_{1\leq i \leq n}$ such that
$Y_i'\leq Y_i''$ for $1\leq i \leq n$ a.s. It follows that
\begin{align}\label{eq09091}
\MM_n\big[(Y_i')_{1\leq i \leq n}\big]\leq^*  \MM_n\big[(Y_i'')_{1\leq i \leq n}\big]
\; \text{a.s.}\;,
\end{align}
 where both sides  are random elements of $C[0,1]$ defined on a common probability space.
Since 
\begin{align*}
(Y_i')_{1\leq i \leq n}\overset{d}{=} (\frac{1}{\sqrt{n}} |W_i^{\mu'/\sqrt{n}}|)_{1\leq i \leq n}\;\;
\text{and}\;\; (Y_i'')_{1\leq i \leq n}\overset{d}{=} (\frac{1}{\sqrt{n}} |W_i^{\mu''/\sqrt{n}}|)_{1\leq i \leq n},
\end{align*}
we have
\begin{align*}
\MM_n\big[(Y_i')_{1\leq i \leq n}\big]&\overset{d}{=} \MM_n
\big[(\frac{1}{\sqrt{n}} |W_i^{\mu'/\sqrt{n}}|)_{1\leq i \leq n}\big],\\
\MM_n\big[(Y_i'')_{1\leq i \leq n}\big]&\overset{d}{=} \MM_n
\big[(\frac{1}{\sqrt{n}} |W_i^{\mu''/\sqrt{n}}|)_{1\leq i \leq n}\big].
\end{align*}
 By (\ref{eq09091}), it follows that
\begin{align}\label{eq09098}
\MM_n
\big[(\frac{1}{\sqrt{n}} |W_i^{\mu'/\sqrt{n}}|)_{1\leq i \leq n}\big] \preceq_{st} 
\MM_n\big[(\frac{1}{\sqrt{n}} |W_i^{\mu''/\sqrt{n}}|)_{1\leq i \leq n}\big].
\end{align}
 By (\ref{eq09097}), as $n \to \infty$,
the left and right sides of (\ref{eq09098}) converge weakly 
to $(|B_t^{\mu'}|)_{0\leq t \leq 1}$ and  $(|B_t^{\mu''}|)_{0\leq t \leq 1}$, respectively. By \cite[Proposition 3]{KKO}, we have
$(|B_t^{\mu'}|)_{0\leq t \leq 1} \preceq_{st} (|B_t^{\mu''}|)_{0\leq t \leq 1}$
as claimed. More generally, we have
$(|B_t^{\mu'}|)_{0\leq t \leq k} \preceq_{st} (|B_t^{\mu''}|)_{0\leq t \leq k}$
for (finite) $k>0$.
We can further show that 
\begin{align} \label{eq09092}
(|B_t^{\mu'}|)_{t\geq 0} \preceq_{st} (|B_t^{\mu''}|)_{t \geq 0}\;,
\end{align}
where both sides  are random elements of the Polish space $C[0,\infty)$ (\emph{cf.} \cite{Whitt}). 
Indeed, for each $k=1,2,\dots$,
define a continuous mapping $\MG_k: C[0,\infty) \to C[0,\infty)$ by
$\MG_k[x]=y$ where $y(t)=x(\min\{t,k\})$ for  $t \in [0,\infty)$. Since 
$(|B_t^{\mu'}|)_{0\leq t \leq k} \preceq_{st} (|B_t^{\mu''}|)_{0\leq t \leq k}$,
we have
\begin{align}\label{eq09099}
\MG_k\Big[(|B_t^{\mu'}|)_{t\geq 0}\Big] \preceq_{st} \MG_k\Big[(|B_t^{\mu''}|)_{t\geq 0}\Big],
\end{align}
where both sides  are random elements of  $C[0,\infty)$. As
$k \to \infty$, the left and
right sides of (\ref{eq09099}) converge a.s. to  
$(|B_t^{\mu'}|)_{t\geq 0}$ and  $(|B_t^{\mu''}|)_{t\geq 0}$, respectively.
By \cite[Proposition 3]{KKO}, (\ref{eq09092}) follows. 
So we have shown that $(|B_t^{\mu}|)_{t \geq 0}$ is stochastically increasing in $\mu \in [0,\infty)$ (and hence decreasing in $\mu \in (-\infty,0]$).
 This result has applications in sequential analysis where the stopping
 rules of some sequential tests concerning
 the drift parameter $\mu$ of Brownian motion are of the form 
 $\tau^{\mu}=\inf\{t\geq 0: |B_t^{\mu}| \geq b(t)\}$ for some boundary $b(t)$.
 (See \emph{e.g.} \cite{Siegmund}.)  By (\ref{eq09092}),
 $\tau^{\mu}$ is stochastically decreasing in $\mu \in [0,\infty)$
 and increasing in $\mu \in (-\infty, 0]$. For the special case of flat boundary 
 (i.e. $b(t)=$ constant), this result can be strengthened with the usual stochastic
 order replaced by the likelihood ratio order  (\emph{cf.} \cite[Remark 6]{HY}).
\end{remark}

\section{Proofs of Theorems \ref{thm1}--\ref{thm4}}

We need the following lemmas. Lemma \ref{lem1} can be shown easily. Lemma \ref{lem2} follows from \cite[Theorem 2]{KKO}.
(The finite-dimensional version of Lemma \ref{lem2} can be found in \cite[Theorem 3.3.7]{MS}.)

\begin{lemma}\label{lem1}
Let  $k\geq 1$. Then

(i) $(p^{k+1}+q^{k+1})/(p^k+q^k)$ is strictly decreasing in $p \in [0,\half]$ and 
strictly increasing in $p \in [\half,1]$; and 

(ii) $(p^{k+1}+q^{k+1})/(p^k+q^k) \leq (p^{k+2}+q^{k+2})/(p^{k+1}+q^{k+1})$ for $p \in [0,1]$.
\end{lemma}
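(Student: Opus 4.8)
The plan is to set $q=1-p$ and write $f_k(p)=(p^{k+1}+q^{k+1})/(p^k+q^k)$, and to exploit the symmetry $f_k(p)=f_k(1-p)$, which is immediate from interchanging the roles of $p$ and $q$. This symmetry means it suffices in part (i) to prove that $f_k$ is strictly decreasing on $[0,\half]$; strict increase on $[\half,1]$ then follows by reflection, since as $p$ increases on $[\half,1]$ the argument $1-p$ decreases on $[0,\half]$. For part (ii) no such reduction is needed, since the claimed inequality is already symmetric in $p$ and $q$ on all of $[0,1]$.

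For part (i), I would compute the derivative directly. Writing $N=p^{k+1}+q^{k+1}$ and $D=p^k+q^k$ and using $dq/dp=-1$, we get $N'=(k+1)(p^k-q^k)$ and $D'=k(p^{k-1}-q^{k-1})$, so the sign of $f_k'$ equals the sign of $N'D-ND'$. Expanding and cancelling, this reduces to
\[
N'D-ND'=p^{2k}-q^{2k}+k\,p^{k-1}q^{k-1}(p^2-q^2).
\]
The decisive simplification uses $p+q=1$: since $p^2-q^2=(p-q)(p+q)=p-q$, one can factor out $(p-q)$ to obtain
\[
N'D-ND'=(p-q)\Big[\textstyle\sum_{j=0}^{k-1}p^{2(k-1-j)}q^{2j}+k\,(pq)^{k-1}\Big].
\]
On $(0,\half)$ the bracketed sum is strictly positive while $p-q<0$, so $f_k'<0$ and $f_k$ is strictly decreasing there, as wanted.

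For part (ii), I would clear denominators: since both denominators are positive, the claim is equivalent to $(p^{k+1}+q^{k+1})^2\leq (p^k+q^k)(p^{k+2}+q^{k+2})$. This is exactly the Cauchy--Schwarz inequality applied to the pairs $(\sqrt{p^k},\sqrt{q^k})$ and $(\sqrt{p^{k+2}},\sqrt{q^{k+2}})$, since $p^{k+1}=\sqrt{p^k}\sqrt{p^{k+2}}$ and likewise for $q$. Equivalently, expanding and cancelling the common terms $p^{2k+2}+q^{2k+2}$ reduces the inequality to $2p^{k+1}q^{k+1}\leq p^kq^k(p^2+q^2)$, i.e. (after dividing by $(pq)^k$ when $p,q>0$) to $2pq\leq p^2+q^2$, which is $(p-q)^2\geq 0$; the boundary cases $p\in\{0,1\}$ are trivial.

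Neither part presents a serious obstacle; the only real work is the algebra in part (i), where the key step is using $p+q=1$ to collapse $p^2-q^2$ to $p-q$ and thereby expose the common factor $(p-q)$ that governs the sign of the derivative. Part (ii) is an immediate application of Cauchy--Schwarz, equivalently the log-convexity of $k\mapsto p^k+q^k$.
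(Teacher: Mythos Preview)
Your proof is correct in both parts. The derivative computation in part (i) is accurate: the key identity $N'D-ND'=p^{2k}-q^{2k}+k(pq)^{k-1}(p^2-q^2)$ checks out, and the factorization via $p+q=1$ cleanly isolates the sign-determining factor $(p-q)$. Part (ii) is indeed an immediate consequence of Cauchy--Schwarz (equivalently, the AM--GM step $2pq\le p^2+q^2$ after clearing denominators).

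As for comparison with the paper: there is nothing to compare against, since the paper does not actually prove Lemma~1. It simply states that ``Lemma~1 can be shown easily'' and moves on. Your argument supplies exactly the kind of short, elementary verification the authors had in mind; the only minor point one might add for completeness is that the strict-decrease conclusion on the closed interval $[0,\half]$ follows from $f_k'<0$ on $[0,\half)$ together with continuity (since $f_k'(\half)=0$), but this is routine.
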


\begin{lemma}\label{lem2}
Let $(\Delta_n)_{n\geq 0}$ and $(\Gamma_n)_{n\geq 0}$ be infinite-dimensional random vectors. Suppose that
$\Delta_0 \preceq_{st} \Gamma_0$, and for $n=0,1,\dots$, 
\begin{align}\label{eq991}
\ML \big[ \Delta_{n+1}\;\big|\; \Delta_i=\delta_i, i\leq n\big] \preceq_{st} 
\ML \big[ \Gamma_{n+1}\;\big|\; \Gamma_i=\gamma_i, i\leq n\big]
\end{align}
whenever $\delta_i\leq \gamma_i$ for all $i\leq n$,
where $\ML \big[ \Delta_{n+1}\;\big|\; \Delta_i=\delta_i, i\leq n\big]$ denotes the conditional distribution
of $\Delta_{n+1}$ given the past history $(\Delta_i=\delta_i, i\leq n)$
and similarly for $\ML \big[ \Gamma_{n+1}\;\big|\; \Gamma_i=\gamma_i, i\leq n\big]$.
Then there exists a coupling $\big((\Delta_n')_{n\geq 0}, (\Gamma_n')_{n\geq 0}\big)$
of $(\Delta_n)_{n\geq 0}$ and $(\Gamma_n)_{n\geq 0}$ such that
$\Delta_n' \leq \Gamma_n'$ for all $n$ a.s. Consequently, we have
$(\Delta_n)_{n\geq 0} \preceq_{st} (\Gamma_n)_{n\geq 0}$.
\end{lemma}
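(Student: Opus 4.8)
The plan is to prove the Lemma by an explicit inductive monotone coupling, driven by a single sequence of independent uniform randomizers, and then to read off the stochastic inequality from the coupling. The one-dimensional fact I will use repeatedly is the elementary form of Strassen's theorem on the real line: if two real random variables satisfy $U \preceq_{st} V$, then taking $f=\mathbb{1}_{(x,\infty)}$ in the definition gives $F_U \geq F_V$ pointwise, so their left-continuous generalized inverses satisfy $F_U^{-1}(u) \leq F_V^{-1}(u)$ for every $u \in (0,1)$; feeding a common $\xi \sim \mathrm{Uniform}(0,1)$ into both inverses therefore produces $F_U^{-1}(\xi) \overset{d}{=} U$, $F_V^{-1}(\xi) \overset{d}{=} V$, and $F_U^{-1}(\xi) \leq F_V^{-1}(\xi)$ a.s.

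First I would set up the randomization. On a common probability space take i.i.d.\ $\xi_0, \xi_1, \dots \sim \mathrm{Uniform}(0,1)$. Since the state spaces are subsets of $R$, regular conditional distributions exist, so for each $n$ I may choose a jointly measurable conditional distribution function $G_{n+1}(y;\delta_0,\dots,\delta_n)$ of $\Delta_{n+1}$ given $(\Delta_i=\delta_i,\,i\le n)$, and likewise $H_{n+1}(y;\gamma_0,\dots,\gamma_n)$ for $\Gamma$; write $G_{n+1}^{-1}, H_{n+1}^{-1}$ for their generalized inverses in $y$. Then define the coupled processes recursively by
\begin{align*}
\Delta_0' &= F_{\Delta_0}^{-1}(\xi_0), \qquad \Gamma_0' = F_{\Gamma_0}^{-1}(\xi_0),\\
\Delta_{n+1}' &= G_{n+1}^{-1}(\xi_{n+1};\Delta_0',\dots,\Delta_n'), \qquad
\Gamma_{n+1}' = H_{n+1}^{-1}(\xi_{n+1};\Gamma_0',\dots,\Gamma_n').
\end{align*}

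The two things to check are that these processes have the correct laws and that they are ordered. For the laws I would argue by induction: because $\xi_{n+1}$ is independent of $(\xi_0,\dots,\xi_n)$ while $(\Delta_i')_{i\le n}$ is a measurable function of $(\xi_0,\dots,\xi_n)$, the conditional distribution of $\Delta_{n+1}'$ given $(\Delta_i')_{i\le n}$ is exactly $G_{n+1}(\cdot;\Delta_0',\dots,\Delta_n')$, so $(\Delta_n')_{n\ge 0}$ has the same successive conditional laws, and hence the same joint law, as $(\Delta_n)_{n\ge 0}$; similarly $(\Gamma_n')_{n\ge 0}\overset{d}{=}(\Gamma_n)_{n\ge 0}$. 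For the ordering I would again induct: $\Delta_0'\le \Gamma_0'$ a.s.\ by the one-dimensional coupling together with $\Delta_0\preceq_{st}\Gamma_0$; and if $\Delta_i'\le\Gamma_i'$ for all $i\le n$, then the histories are ordered, so hypothesis (\ref{eq991}) yields $G_{n+1}(\cdot;\Delta_0',\dots,\Delta_n')\ge H_{n+1}(\cdot;\Gamma_0',\dots,\Gamma_n')$ pointwise, and the shared randomizer $\xi_{n+1}$ gives $\Delta_{n+1}'\le\Gamma_{n+1}'$. Thus $\Delta_n'\le\Gamma_n'$ for all $n$ a.s., which is the asserted coupling. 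The final statement is then immediate: for any bounded increasing measurable $f:R^\infty\to R$, monotonicity gives $f\big((\Delta_n')_{n\ge0}\big)\le f\big((\Gamma_n')_{n\ge0}\big)$ a.s., whence $Ef\big((\Delta_n)_{n\ge0}\big)=Ef\big((\Delta_n')_{n\ge0}\big)\le Ef\big((\Gamma_n')_{n\ge0}\big)=Ef\big((\Gamma_n)_{n\ge0}\big)$, i.e.\ $(\Delta_n)_{n\ge0}\preceq_{st}(\Gamma_n)_{n\ge0}$.

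The main obstacle I anticipate is the measurability bookkeeping rather than any inequality. One must guarantee that regular conditional distributions exist and can be selected as jointly measurable functions of the conditioning history (this is where the Polish/real state space is used), and that the generalized inverses $G_{n+1}^{-1},H_{n+1}^{-1}$ are jointly measurable in $(u;\delta_0,\dots,\delta_n)$ so that the recursion actually defines random variables. A secondary point to state explicitly is that matching all finite-dimensional successive conditional laws determines the joint law on $R^\infty$, which is what legitimizes the induction reproducing the full infinite-dimensional distribution. The whole construction can alternatively be quoted directly from \cite[Theorem 2]{KKO}; the sketch above is the self-contained route.
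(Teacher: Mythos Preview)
Your proof is correct. The paper does not actually prove this lemma at all: it simply states that ``Lemma~\ref{lem2} follows from \cite[Theorem~2]{KKO}'' and moves on. What you have written is precisely the standard quantile-coupling construction that underlies that cited result, and you even note at the end that one could alternatively quote \cite{KKO} directly. So there is nothing to compare beyond observing that you supplied the self-contained argument while the paper outsourced it; your measurability caveats (existence and joint measurability of regular conditional distributions and their generalized inverses, and the fact that matching successive conditional laws determines the full law on $R^\infty$) are exactly the right technical points to flag.
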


\begin{proof}[Proof of Theorem \ref{thm1}]
To prove
$(|S_n^{p''}|)_{n \geq 0} \preceq_{st} (|S_n^{p'}|)_{n \geq 0}$ for $0\leq p' <p'' \leq \half$, 
by Lemma \ref{lem2},
it suffices 
to show that for $n\geq 0$,
\begin{align}
\ML \big[\;|S_{n+1}^{p''}|\;\big|\;|S_i^{p''}|=\delta_i, i\leq n\big] \preceq_{st}
\ML \big[\;|S_{n+1}^{p'}|\;\big| \;|S_i^{p'}|=\gamma_i, i\leq n\big],\label{eq998}
\end{align}
where $(\delta_0,\delta_1,\dots, \delta_{n})$
and $(\gamma_0,\gamma_1,\dots, \gamma_{n})$ are  non-negative integers satisfying 
$\delta_0=\gamma_0=0$ and  $\delta_i\leq \gamma_i$ for $1\leq i\leq n$.
(Note that it is necessary for $(\delta_0,\delta_1,\dots, \delta_{n})$
and $(\gamma_0,\gamma_1,\dots, \gamma_{n})$ to satisfy the  condition that
$|\delta_{i+1}-\delta_i|=1$ and $|\gamma_{i+1}-\gamma_i|=1$ for all $i$ in order for
$P(|S_i^{p'}|=\gamma_i, i\leq n)$ and $P(|S_i^{p''}|=\delta_i, i\leq n)$ to be positive.)

By \cite[Proposition 4.1.1, Example 4.1(D), pp.166--167]{Ross}, 
the absolute value of the Bernoulli random walk $(|S_n^p|)_{n\geq 0}$ is a Markov chain with transition probabilities
$P(|S_{n+1}^p|=1\;\big|\; |S_n^p|=0)=1$, and for $k\geq 1$,
\begin{align}
P(|S_{n+1}^p|=k+1\;\big|\; |S_n^p|=k)&=\frac{p^{k+1}+q^{k+1}}{p^k+q^k},\label{eq001}\\
P(|S_{n+1}^p|=k-1 \;\big|\; |S_n^p|=k)&=\frac{p^k q+p q^k}{p^k+q^k}=1-\frac{p^{k+1}+q^{k+1}}{p^k+q^k}.
\notag
\end{align}
Note that $|S_n^p|$ and $n$ have the same parity, and that
the conditional distribution of $|S_{n+1}^p|$ given
the past history $(|S_i^p|=k_i,\; i=0,\dots, n)$ has
a one-point support $\{1\}$ if $k_n=0$ and a two-point support $\{k_n-1, k_n+1\}$ if $k_n \geq 1$.
We are now ready to verify condition (\ref{eq998}).
Since $\gamma_{n}-\delta_{n}\geq 0$ is
necessarily an even integer, and since (\ref{eq998}) holds trivially 
for $\gamma_{n}-\delta_{n}\geq 2$, it remains to deal with the case
$\gamma_{n}=\delta_{n}=k\geq 0$. If $k=0$, then
$P(|S_{n+1}^{p'}|=1\;\big|\; |S_n^{p'}|=0)= P(|S_{n+1}^{p''}|=1 \;\big|\; |S_n^{p''}|=0)= 1$. If $k\geq 1$, 
by (\ref{eq001}), we have
\[
P(|S_n^{p'}|=k+1\;\big| \; |S_n^{p'}|=k)> P(|S_n^{p''}|=k+1\;\big|\; |S_n^{p''}|=k),
\] 
since by Lemma \ref{lem1}(i), $(p^{k+1}+q^{k+1})/(p^k+q^k)$ is strictly decreasing
in $p \in [0,\half]$. So, (\ref{eq998}) holds, and 
the proof is complete.
\end{proof}

\begin{proof}[Proof of Theorem \ref{thm2}]
Unlike $(|S_n^p|)_{n\geq 0}$, $(|U_n^p|)_{n\geq 0}$ is not a Markov chain in general, since the initial state $U_0^p \in \{+1,-1\}$ is random. However, for $p'=\half$, since $X_0,X_1^{p'},X_2^{p'},\dots$ are i.i.d.,
  $(U_n^{p'})_{n\geq 0}$ may be thought of as a Bernoulli random walk
starting from state 0 at time $-1$. It follows that  $(|U_n^{p'}|)_{n\geq 0}$ is a Markov chain
with transition probabilities given by (\ref{eq001}), i.e.,
\begin{align}\label{eq0120}
P(|U_{n+1}^{p'}|=k+1\;\big|\; |U_n^{p'}|=k)=
\begin{cases}1, &\text{for $k=0$;}\\
 \half, &\text{for $k>0$.}
\end{cases}
\end{align} 

For general $p \in [0,1]$,  while $(U_n^p)_{n\geq 0}$ has a random initial state,
once $U_m^p=0$ for some $m>0$,  $(U_{m+n}^p)_{n\geq 0}$ (conditionally) behaves like
$(S_n^p)_{n\geq 0}$.
For  given $n\geq 0$, let $(k_0,\dots, k_n)$ be a sequence of non-negative integers such that 
\begin{align}
k_0=1, \;   |k_i-k_{i-1}|=1\; \text{for  $i=1,\dots, n$}. \label{eq0121}
\end{align}
Then $0\leq k_i\leq i+1$ and $i-k_i$ is odd.
 (Note that for  $P(|U_i^p|=k_i, i=0,\dots, n)$ to be positive, it is necessary for
 $(k_0,\dots, k_n)$ to satisfy (\ref{eq0121}).)
If $k_i=0$ (for some $i<n$) and $k_n>0$, we have by (\ref{eq001}) and Lemma \ref{lem1}(i), 
\begin{align}
P(|U_{n+1}^p|=k_n+1\;\big|\; |U_i^p|=k_i, i=0,\dots, n)
&=\frac{p^{k_n+1}+ q^{k_n+1}}{p^{k_n}+ q^{k_n}} \label{eq01210}\\
&\geq \half.\notag
\end{align}
 If $k_n=0$, then
$P(|U_{n+1}^p|=k_n+1\;\big|\; |U_i^p|=k_i, i=0,\dots, n)
=1$.
If $k_i>0$ for all $i\leq n$, we have
\begin{align}
P(&|U_{n+1}^p|=k_n+1\;\big|\; |U_i^p|=k_i, i=0,\dots, n)\notag\\
&=\frac{P(|U_i^p|=k_i, i=0,\dots, n,|U_{n+1}^p|=k_n+1)}{P(|U_i^p|=k_i, i=0,\dots, n)}
\notag\\
&=\frac{P(U_i^p=k_i, i\leq n,U_{n+1}^p=k_n+1)+P(U_i^p=-k_i, i\leq n,U_{n+1}^p=-k_n-1)}
{P(U_i^p=k_i, i=0,\dots, n)+P(U_i^p=-k_i, i=0,\dots, n)}
\notag\\
&=\frac{\half (pq)^{(n-k_n+1)/2}(p^{k_n}+ q^{k_n})} {\half (pq)^{(n-k_n+1)/2}(p^{k_n-1}+ q^{k_n-1})}\notag\\
&=\frac{p^{k_n}+ q^{k_n}}{p^{k_n-1}+ q^{k_n-1}} \label{eq01211}\\
&\geq \half.\notag
\end{align}
Note that $(p^{k_n}+ q^{k_n})/(p^{k_n-1}+ q^{k_n-1})=\half$ for all $p$ if $k_n=1$
and $>\half$ for $p \neq \half$ if $k_n>1$ (by Lemma \ref{lem1}(i)). So for  all $(k_0,\dots, k_n)$ satisfying
(\ref{eq0121}), we have shown that
\begin{align}\label{eq0123}
P(|U_{n+1}^p|=k_n+1\;\big|\; |U_i^p|=k_i, i=0,\dots, n)
\begin{cases}
=1, &\text{for $k_n=0$};\\
\geq \half, &\text{for $k_n>0$}.
\end{cases}
\end{align}

We are now ready to show that the conditional distributions of $(|U_n^p|)_{n\geq 0}$
and $(|U_n^{p'}|)_{n\geq 0}$ with $p'=\half \neq p \in [0,1]$ 
satisfy condition (\ref{eq991}) of Lemma \ref{lem2}. Specifically, for $n\geq 0$, let 
$(\delta_0,\dots, \delta_n)$ and $(\gamma_0,\dots, \gamma_n)$ satisfy condition (\ref{eq0121}) and  
$0\leq \delta_i\leq \gamma_i$ for 
all $i\leq n$. We need to show that 
\begin{align}
\ML \big[\;|U_{n+1}^{p'}|\;\big|\;|U_i^{p'}|=\delta_i, i\leq n\big] \preceq_{st}
\ML \big[\; |U_{n+1}^p|\;\big|\; |U_i^p|=\gamma_i, i\leq n\big]. \label{eq997}
\end{align}
Note that $\gamma_n-\delta_n\geq 0$ is necessarily an even integer. If $\gamma_n-\delta_n\geq 2$,
(\ref{eq997}) holds trivially. If $\delta_n=\gamma_n=k\geq 0$, we have by (\ref{eq0120}) and
 (\ref{eq0123}) 
\begin{align*}
P(|U_{n+1}^{p}|=k+1\;&\big|\; |U_i^p|=\gamma_i, i=0,\dots, n) \\
&\geq 
P(|U_{n+1}^{p'}|=k+1\;\big|\; |U_i^{p'}|=\delta_i, i=0,\dots, n),
\end{align*}
establishing (\ref{eq997}). The proof is complete.
\end{proof}

\begin{remark}\label{rem7}
In general, for $0<p'<p''<\half$, it does not hold that
\begin{align}
\ML \big[\; |U_{n+1}^{p''}| \;\big|\; |U_i^{p''}|=\delta_i, i\leq n\big] \preceq_{st}
\ML \big[\; |U_{n+1}^{p'}| \;\big|\; |U_i^{p'}|=\gamma_i, i\leq n\big]\label{eq01212}
\end{align}
with $0\leq \delta_i\leq \gamma_i$ for all $i\leq n$. To see this, for $\half \neq p \in (0,1)$,
by (\ref{eq01210}) and (\ref{eq01211}),  
\begin{align*}
P(|U_3^p|=2 \;\big|\; |U_0^p|=1, |U_1^p|=0, |U_2^p|=1)&=\frac{p^2+q^2}{p+q}>\half,\\
P(|U_3^p|=2 \;\big|\; |U_0^p|=1, |U_1^p|=2, |U_2^p|=1)&=\frac{p+q}{p^0+q^0}=\half.
\end{align*}
This shows that $(|U_n^p|)_{n\geq 0}$ is not a Markov chain for $\half \neq p \in (0,1)$.
Furthermore, (\ref{eq01212}) does not hold for $(\delta_0, \delta_1, \delta_2)=(1,0,1)$
and $(\gamma_0,\gamma_1,\gamma_2)=(1,2,1)$.
Hence we cannot apply Lemma \ref{lem2} to show that 
$(|U_n^{p''}|)_{n\geq 0} \preceq_{st} (|U_n^{p'}|)_{n\geq 0}$ for $0<p'<p''<\half$.
\end{remark}

\begin{proof}[Proof of Theorem \ref{thm3}]
We first derive the conditional distributions of  $(|V_n^p|)_{n\geq 0}$ and show that it
is a (non-homogeneous) Markov chain with state space $\{1,3,\dots\}$.
Let $k_i>0, i=0,\dots, n$ be  odd integers such that $k_0=1$, and for $i=0,\dots, n-1$,
\begin{align}\label{eq996}
k_{i+1}-k_i=
\begin{cases}
0 \;\text{or $2$}, \;\text{if $k_i=1$};\\
-2 \; \text{or $2$}, \;\text{otherwise}.
\end{cases}
\end{align}
(Note that for   
$P(|V_i^p|=k_i, i=0,\dots, n)$ to be positive, it is necessary for  
 $(k_0,\dots, k_n)$ to satisfy (\ref{eq996}).)
 To derive $P(|V_i^p|=k_i, i=0,\dots, n)$, observe
 that the sample path $(|V_0^p|=k_0, |V_1^p|=k_1\dots, |V_n^p|=k_n)$ corresponds to  a pair of
sequences of increments $(X_0, X_1^p,\dots, X_n^p)
\in \{+1,-1\}^{n+1}$, each being a reflection of the other. As an example, consider $k_0=1, k_1=3, k_2=1,k_3=1, k_4=3$, for which
the  corresponding pair of sequences of increments are $(X_0,X_1^p,X_2^p,X_3^p, X_4^p)=(+1, +1 -1, -1, -1)$
and $(-1,-1,+1,+1,+1)$. Furthermore, this pair of sequences of increments correspond to 
$(V_0^p,V_1^p,V_2^p,V_3^p,V_4^p)=(1,3,1,-1,-3)$ and $(-1,-3,-1,1, 3)$.
Note that $k_2=k_3=1$ implies $V_2^p V_3^p=-1$ (i.e. 
$(V_2^p, V_3^p)=(1,-1)$ or $(-1,1)$). More generally,
$|V_i^p|=|V_{i+1}^p|=1$ implies $V_i^p V_{i+1}^p=-1$. In other words, there is a sign change
between $i$ and $i+1$ if (and only if) $|V_i^p|=|V_{i+1}^p|=1$. It is also worth noting that
the sign of $V_0^p V_n^p$ depends on the parities of $(k_n-1)/2$ and $n$. If there is no $i$ with
$k_i=k_{i+1}=1$, then there is no sign change throughout, so that $V_0^p V_n^p>0$ and
$(k_n-1)/2$ and $n$ have the same parity. If there is exactly one $i$ with
$k_i=k_{i+1}=1$, then there is only one sign change, so that $V_0^p V_n^p<0$ and
$(k_n-1)/2$ and $n$ are of opposite parity. More generally, if there is an even number of
$i \in \{0,1,\dots, n-1\}$ with $k_i=k_{i+1}=1$, then $V_0^p V_n^p>0$ and $(k_n-1)/2$ and $n$ have the same parity. If there is an odd number of  $i \in \{0,1,\dots, n-1\}$ with
$k_i=k_{i+1}=1$, then  $V_0^p V_n^p<0$ and
$(k_n-1)/2$ and $n$ are of opposite parity.

Case (i): $(k_n-1)/2$ and $n$ have the same parity.
Then 
each of the corresponding pair of sequences of increments satisfies
$V_0^p V_n^p= X_0 (X_0+2\sum_{i=1}^n X_i^p)>0$ 
 (i.e. $X_0 \;(\sum_{i=1}^n X_i^p) \geq 0$). One  
sequence of increments
has $X_0=1$ and exactly $r$ of $X_1^p,\dots, X_n^p$ being $+1$ where $r=(k_n-1)/4+n/2$,
while the other has $X_0=-1$ and exactly $r$ of  $X_1^p, \dots, X_n^p$ being $-1$. Thus 
\begin{align}
P(|V_0^p|=k_0,\dots, |V_n^p|=k_n)&=\half (pq)^{n-r}(p^{2r-n}+q^{2r-n})\notag\\
&=
\half (pq)^{n/2-(k_n-1)/4} \big[p^{(k_n-1)/2}+q^{(k_n-1)/2}\big],\notag
\end{align}
from which it follows that
\begin{align}
P(|V_{n+1}^p|=k_n+2\;&\big|\; |V_i^p|=k_i, i=0,\dots, n)\notag\\
&=\frac{P(|V_0^p|=k_0,\dots, |V_n^p|=k_n, |V_{n+1}^p|=k_n+2)}{P(|V_0^p|=k_0,\dots, |V_n^p|=k_n)}\notag\\
&=\frac{\half (pq)^{(n+1)/2-(k_n+2-1)/4} \big[p^{(k_n+2-1)/2}+q^{(k_n+2-1)/2}\big]}{\half (pq)^{n/2-(k_n-1)/4} \big[p^{(k_n-1)/2}+q^{(k_n-1)/2}\big]}\notag\\
&=\frac{p^{(k_n+1)/2}+q^{(k_n+1)/2}}{p^{(k_n-1)/2}+q^{(k_n-1)/2}},\label{eq995}
\end{align}
which equals $\half$ for all $p$ if $k_n=1$ and is (strictly) decreasing in $p \in [0,\half]$ 
if $k_n\geq 3$ (by Lemma \ref{lem1}(i)).

Case (ii): $(k_n-1)/2$ and $n$ are of opposite parity. Then 
each of the corresponding pair of sequences of increments satisfies 
 $V_0^p V_n^p= X_0 (X_0+2\sum_{i=1}^n X_i^p)<0$ (i.e. $X_0 \;(\sum_{i=1}^n X_i^p)<0$). One  
sequence of increments
has $X_0=1$ and exactly $r'$ of $X_1^p,\dots, X_n^p$ being $-1$ where $r'=(k_n+1)/4+n/2$,
while the other has $X_0=-1$ and exactly $r'$ of  $X_1^p, \dots, X_n^p$ being $+1$. Thus 
\begin{align}
P(|V_0^p|=k_0,\dots, |V_n^p|=k_n)&=\half (pq)^{n-r'}(p^{2r'-n}+q^{2r'-n})\notag\\
&=
\half (pq)^{n/2-(k_n+1)/4}(p^{(k_n+1)/2}+q^{(k_n+1)/2}),\notag
\end{align}
from which it follows that
\begin{align}
P(|V_{n+1}^p|=k_n+2\;&\big|\; |V_i^p|=k_i, i=0,\dots, n)\notag\\
&=\frac{P(|V_0^p|=k_0,\dots, |V_n^p|=k_n, |V_{n+1}^p|=k_n+2)}{P(|V_0^p|=k_0,\dots, |V_n^p|=k_n)}\notag\\
&=\frac{\half (pq)^{(n+1)/2-(k_n+2+1)/4} \big[p^{(k_n+2+1)/2}+q^{(k_n+2+1)/2}\big]}{\half (pq)^{n/2-(k_n+1)/4} \big[p^{(k_n+1)/2}+q^{(k_n+1)/2}\big]}\notag\\
&=\frac{p^{(k_n+3)/2}+q^{(k_n+3)/2}}{p^{(k_n+1)/2}+q^{(k_n+1)/2}},\label{eq994}
\end{align}
which is (strictly) decreasing in $p\in [0,\half]$. 

For  both cases (i) and (ii), 
$P(|V_{n+1}^p|=k_n+2\;\big|\; |V_i^p|=k_i, i=0,\dots, n)$ depends only on $k_n$
and is decreasing in $p \in [0,\half]$. So $(|V_n^p|)_{n\geq 0}$ is a (non-homogeneous)
Markov chain.  We are now ready to verify condition (\ref{eq991}) of Lemma \ref{lem2} so as to
establish
$(|V_n^{p''}|)_{n\geq 0} \preceq_{st} (|V_n^{p'}|)_{n\geq 0}$  for $0 \leq p'<p''\leq \half$. 
Specifically, for odd integers  $(\delta_0,\dots, \delta_n)$ and $(\gamma_0,\dots, \gamma_n)$
satisfying (\ref{eq996}) and $1\leq \delta_i\leq \gamma_i$ for all $i$, we need to show that
\begin{align}
\ML \big[\;|V_{n+1}^{p''}|\;\big|\;|V_i^{p''}|=\delta_i, i\leq n\big] \preceq_{st}
\ML \big[\;|V_{n+1}^{p'}|\;\big|\;|V_i^{p'}|=\gamma_i, i\leq n\big].\label{eq993}
\end{align}
Note that $\gamma_n-\delta_n\geq 0$ is even. If $\gamma_n-\delta_n\geq 4$, then
(\ref{eq993}) holds trivially. If $\gamma_n=\delta_n$, then (\ref{eq993})
holds since the transition probabilities in (\ref{eq995}) and (\ref{eq994}) are
decreasing in $p \in [0,\half]$ by Lemma \ref{lem1}(i). Finally to show that (\ref{eq993}) holds for
$\gamma_n=\delta_n+2$, it suffices to establish
\begin{align}\label{eq992}
P(|V_{n+1}^{p'}|=\gamma_n+2\;\big| \; |V_{n}^{p'}|=\gamma_n)\geq
P(|V_{n+1}^{p''}|=\delta_n+2\;\big| \; |V_{n}^{p''}|=\delta_n).
\end{align}
If $(\gamma_n-1)/2$ and $n$ have the same parity, then $(\delta_n-1)/2$ and $n$ are of opposite parity,
so  that by (\ref{eq995}) and (\ref{eq994}), 
\begin{align*}
P(|V_{n+1}^{p'}|=\gamma_n+2\;\big| \; |V_{n}^{p'}|=\gamma_n)
&=\frac{(p')^{(\gamma_n+1)/2}+(q')^{(\gamma_n+1)/2}}{(p')^{(\gamma_n-1)/2}+(q')^{(\gamma_n-1)/2}}\\
&=\frac{(p')^{(\delta_n+3)/2}+(q')^{(\delta_n+3)/2}}{(p')^{(\delta_n+1)/2}+(q')^{(\delta_n+1)/2}}\\
&> \frac{(p'')^{(\delta_n+3)/2}+(q'')^{(\delta_n+3)/2}}{(p'')^{(\delta_n+1)/2}+(q'')^{(\delta_n+1)/2}}
\;\;\text{(by Lemma \ref{lem1}(i))}\\
&=P(|V_{n+1}^{p''}|=\delta_n+2\;\big| \; |V_{n}^{p''}|=\delta_n).
\end{align*}
If 
$(\gamma_n-1)/2$ and $n$ are of opposite parity, then $(\delta_n-1)/2$ and $n$ have the same parity,
so  that  by (\ref{eq995}) and (\ref{eq994}), 
\begin{align*}
P(|V_{n+1}^{p'}|=\gamma_n+2\;\big| \; |V_{n}^{p'}|=\gamma_n)
&=\frac{(p')^{(\gamma_n+3)/2}+(q')^{(\gamma_n+3)/2}}{(p')^{(\gamma_n+1)/2}+(q')^{(\gamma_n+1)/2}}\\
&=\frac{(p')^{(\delta_n+5)/2}+(q')^{(\delta_n+5)/2}}{(p')^{(\delta_n+3)/2}+(q')^{(\delta_n+3)/2}}\\
&\geq \frac{(p')^{(\delta_n+1)/2}+(q')^{(\delta_n+1)/2}}{(p')^{(\delta_n-1)/2}+(q')^{(\delta_n-1)/2}}
\;\;\text{(by Lemma \ref{lem1}(ii))}\\
&\geq \frac{(p'')^{(\delta_n+1)/2}+(q'')^{(\delta_n+1)/2}}{(p'')^{(\delta_n-1)/2}+(q'')^{(\delta_n-1)/2}}
\;\;\text{(by Lemma \ref{lem1}(i))}\\
&=P(|V_{n+1}^{p''}|=\delta_n+2\;\big| \; |V_{n}^{p''}|=\delta_n).
\end{align*}
So, (\ref{eq992}) holds. The proof is complete.
\end{proof}

\begin{proof}[Proof of Theorem \ref{thm4}]
First, we show that
$(|W_n^{\mu}|)_{n\geq 0}$ is a Markov chain for $\mu \in R$. For $x_i> 0, i=1,\dots, n$, let 
$f_n(x_1,\dots,x_n)$ denote the joint density function of $(|W_1^\mu|,\dots, |W_n^\mu|)$. 
For $y_i\in R, i=1,\dots, n$, let $g_n(y_1,\dots, y_n)$ denote the joint density function
of $(W_1^\mu,\dots, W_n^\mu)$. Letting $\delta_0 x_0=0$, we have
\begin{align}
f_n(x_1,\dots, x_n)&=\sum_{\delta_i \in \{1,-1\},\; i=1,\dots, n} g_n(\delta_1 x_1,\dots, \delta_n x_n)
\notag\\
&=(2\pi)^{-n/2} \sum_{\delta_i \in \{1,-1\},\; i=1,\dots, n} \exp\big[-\half \sum_{i=1}^n (\delta_i x_i-
\delta_{i-1} x_{i-1}-\mu)^2\big]\notag\\
&=(2\pi)^{-n/2} \sum_{\delta_i \in \{1,-1\},\; i=1,\dots, n} \exp\big[-\frac{n}{2} \mu^2+ \mu \delta_n x_n -\half \sum_{i=1}^n (\delta_i x_i-\delta_{i-1} x_{i-1})^2\big]\notag\\
&=(2\pi)^{-n/2} \exp\big(-\frac{n}{2} \mu^2\big) \Big[e^{\mu x_n} h_n(x_1,\dots, x_n) 
+e^{-\mu x_n} h_n'(x_1,\dots, x_n)\Big],\label{eq400}
\end{align}
where 
\begin{align}
h_n(x_1,\dots, x_n)&=\sum_{\delta_i \in \{1,-1\}, i=1,\dots, n-1,\; \delta_n=1} \exp\big[-\half \sum_{i=1}^n (\delta_i x_i-\delta_{i-1} x_{i-1})^2\big]\label{eq401}\\
h_n'(x_1,\dots, x_n)&=\sum_{\delta_i \in \{1,-1\}, i=1,\dots, n-1, \;\delta_n=-1} \exp\big[-\half \sum_{i=1}^n (\delta_i x_i-\delta_{i-1} x_{i-1})^2\big].\notag
\end{align}
Letting $\gamma_i=-\delta_i, i=1,\dots, n$, we have
\begin{align}
h_n'(x_1,\dots, x_n)&=\sum_{\gamma_i \in \{1,-1\}, i=1,\dots, n-1, \;\gamma_n=1} \exp\big[-\half \sum_{i=1}^n (-\gamma_i x_i
+\gamma_{i-1} x_{i-1})^2\big]\notag\\
&=\sum_{\gamma_i \in \{1,-1\}, i=1,\dots, n-1, \;\gamma_n=1} \exp\big[-\half \sum_{i=1}^n (\gamma_i x_i
-\gamma_{i-1} x_{i-1})^2\big]\notag\\
&=h_n(x_1,\dots, x_n),\label{eq403}
\end{align}
so that by (\ref{eq400}),
\begin{align}\label{eq402}
f_n(x_1,\dots, x_n)=(2\pi)^{-n/2} \exp\big(-\frac{n}{2} \mu^2\big) \;h_n(x_1,\dots, x_n)\;\Big(e^{\mu x_n} +e^{-\mu x_n}\Big).
\end{align}
The conditional density of $|W_{n+1}^{\mu}|$ at 
$x_{n+1}$
given $|W_i^{\mu}|=x_i, i=1,\dots, n\;$ equals by (\ref{eq402})
\begin{align}
\frac{f_{n+1}(x_1,\dots, x_n, x_{n+1})}{f_n(x_1,\dots, x_n)}&=(2\pi)^{-1/2} e^{-\mu^2/2}
\Big(\frac{h_{n+1}(x_1,\dots, x_{n+1})}{h_n(x_1,\dots, x_n)}\Big) \Big(\frac{e^{\mu x_{n+1}}
+e^{-\mu x_{n+1}}}{e^{\mu x_n}+e^{-\mu x_n}}\Big).\label{eq404}
\end{align}
By (\ref{eq401}) and (\ref{eq403}),
\begin{align*}
h_{n+1}&(x_1,\dots, x_{n+1})\\
&=\sum_{\delta_i \in \{1,-1\}, i=1,\dots, n,\; \delta_{n+1}=1} \exp\big[-\half \sum_{i=1}^{n+1} (\delta_i x_i-\delta_{i-1} x_{i-1})^2\big]\\
&=\sum_{\delta_i \in \{1,-1\}, i=1,\dots, n-1,\; \delta_n=\delta_{n+1}=1} \exp\big[-\half \sum_{i=1}^{n+1} (\delta_i x_i-\delta_{i-1} x_{i-1})^2\big]\\
&\qquad \qquad +\sum_{\delta_i \in \{1,-1\}, i=1,\dots, n-1,\; \delta_n=-1, \;\delta_{n+1}=1} \exp\big[-\half \sum_{i=1}^{n+1} (\delta_i x_i-\delta_{i-1} x_{i-1})^2]\\
&=\exp\big[-\half (x_{n+1}-x_n)^2\big] h_n(x_1,\dots,x_n)+\exp\big[-\half (x_{n+1}+x_n)^2\big] 
h_n'(x_1,\dots,x_n)\\
&=\exp\big[-\half (x_n^2+x_{n+1}^2)\big] \big[\exp(x_n x_{n+1})+\exp(-x_n x_{n+1})\big] h_n(x_1,\dots, x_n).
\end{align*}
By (\ref{eq404}),
\begin{align}
&\frac{f_{n+1}(x_1,\dots, x_n, x_{n+1})}{f_n(x_1,\dots, x_n)} \notag\\
&=(2\pi)^{-1/2} e^{-\mu^2/2}
\exp\big[-\half (x_n^2+x_{n+1}^2)\big] \big[e^{x_n x_{n+1}}+e^{-x_n x_{n+1}}\big]
\Big(\frac{e^{\mu x_{n+1}}
+e^{-\mu x_{n+1}}}{e^{\mu x_n}+e^{-\mu x_n}}\Big).\label{eq4041}
\end{align}
Since the conditional density of $|W_{n+1}^\mu|$ depends on $x_1,\dots, x_n$ only through
$x_n$, $(|W_n^\mu|)_{n\geq 0}$
is a Markov chain. Denote by $\rho(y; x, \mu)$ the conditional density of
$|W_{n+1}^\mu|$ at $y$ given $|W_n^\mu|=x$  (the transition probability density).
By (\ref{eq4041}), we have
\begin{align}
\rho(y;x,\mu)&=(2\pi)^{-\half} e^{-\half(\mu^2+x^2+y^2)} \big[e^{xy}+e^{-xy}\big]
\Big(\frac{e^{\mu y}
+e^{-\mu y}}{e^{\mu x}+e^{-\mu x}}\Big)\notag\\
&=(2\pi)^{-\half} \Big[\frac{e^{-\half (\mu^2+x^2+y^2)}}{e^{\mu x}+e^{-\mu x}}\Big] (e^{\mu y}+e^{-\mu y})
(e^{xy}+e^{-xy}). \label{eq405}
\end{align}
It is readily shown  that for $0\leq \mu'<\mu''$ and $0<x'\leq x''$, 
\begin{align*}
\frac{e^{\mu'' y}+e^{-\mu'' y}}{e^{\mu' y}+e^{-\mu' y}}\;\;\text{and}\;\;
\frac{e^{x'' y}+e^{-x'' y}}{e^{x' y}+e^{-x' y}}\;\;\text{are both increasing in}\;\; y>0.
\end{align*}
So, by (\ref{eq405}),
\begin{align}\label{eq406}
\frac{\rho(y;x'', \mu'')}{\rho(y; x', \mu')}=K \Big[\frac{e^{\mu'' y}+e^{-\mu'' y}}{e^{\mu' y}+e^{-\mu' y}}\Big]
\Big[\frac{e^{x'' y}+e^{-x'' y}}{e^{x' y}+e^{-x' y}}\Big]\;\;\text{is increasing in}\;\; y>0,
\end{align}
 where $K=K(\mu',\mu'', x', x'')>0$ is independent of $y$.

We are now ready to prove $(|W_n^{\mu'}|)_{n\geq 0} \preceq_{st} (|W_n^{\mu''}|)_{n\geq 0}$
for $0\leq \mu'<\mu''$
by verifying  condition (\ref{eq991}) of Lemma \ref{lem2}. Specifically, for $0\leq \mu'< \mu''$ and
$0<x' \leq x''$, we need to show that
\begin{align}\label{eq407}
\ML \Big[\; |W_{n+1}^{\mu'}|\;\Big|\; |W_n^{\mu'}|=x'\;\Big] \preceq_{st} \ML \Big[\; |W_{n+1}^{\mu''}|\;
\Big|\; |W_n^{\mu''}|=x''\;\Big].
\end{align}
By (\ref{eq406}), $\ML \Big[\; |W_{n+1}^{\mu'}|\;\Big|\; |W_n^{\mu'}|=x'\;\Big]$ is smaller
than $\ML \Big[\; |W_{n+1}^{\mu''}|\;
\Big|\; |W_n^{\mu''}|=x''\;\Big]$ in the likelihood ratio order, implying (\ref{eq407}).
The proof is complete.
\end{proof}

\section{Proofs of Theorems \ref{thm5} and \ref{thm6}}

 We need the following lemma to prove Theorems  \ref{thm5} and \ref{thm6}.
 
\begin{lemma}\label{lem3}
Let  $\ell,\ell', m,m'\geq 0$ be non-negative integers such that
$\ell\leq \ell'$ and $m\leq m'$. 
Then 

(i) for $0<p<\half$, $\frac{d}{dp}\;(pq)^{\ell'} (p^m+q^m)<0$ implies 
$\frac{d}{dp} \;(pq)^{\ell} (p^{m'}+q^{m'})<0$;

(ii) for $0<p<p'\leq \half$, 
\[
\frac{(p' q')^{\ell}((p')^{m'}+(q')^{m'})}{(pq)^{\ell} (p^{m'}+q^{m'})}\leq 
\frac{(p' q')^{\ell'}((p')^{m}+(q')^{m})}{(pq)^{\ell'} (p^{m}+q^{m})},
\]
where $q=1-p$ and $q'=1-p'$.
\end{lemma}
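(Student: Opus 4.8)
The plan is to run everything through the logarithmic derivative of $g_{\ell,m}(p):=(pq)^{\ell}(p^m+q^m)$ with $q=1-p$, since this cleanly separates the $\ell$-dependence from the $m$-dependence. Differentiating,
\[
\frac{g_{\ell,m}'(p)}{g_{\ell,m}(p)}=\ell\,A(p)+B_m(p),
\qquad
A(p):=\frac{1-2p}{pq},
\qquad
B_m(p):=\frac{m\big(p^{m-1}-q^{m-1}\big)}{p^m+q^m}.
\]
On $(0,\half)$ we have $A(p)>0$ (as $p<q$), while $B_m(p)\le 0$ (again as $p<q$). The one step I expect to carry the real content is the claim that $B_m(p)$ is non-increasing in the integer $m$, for each fixed $p\in(0,\half)$. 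I would obtain this directly from Lemma~\ref{lem1}(i): since
\[
B_{m+1}(p)-B_m(p)=\frac{d}{dp}\log\frac{p^{m+1}+q^{m+1}}{p^m+q^m},
\]
and the ratio on the right is (strictly) decreasing in $p$ on $[0,\half]$, its logarithmic derivative is $\le 0$, so $B_{m'}(p)\le B_m(p)$ whenever $m\le m'$. The boundary case $m=0$ is handled by the trivial identity $\tfrac{p+q}{2}\equiv\half$, which gives $B_1=B_0=0$.

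For part (i), the hypothesis $\frac{d}{dp}g_{\ell',m}(p)<0$ reads $\ell'A(p)+B_m(p)<0$. Using $\ell\le\ell'$ together with $A(p)>0$, and $B_{m'}(p)\le B_m(p)$ from the monotonicity above, I would simply add the two inequalities:
\[
\ell\,A(p)+B_{m'}(p)\le \ell'\,A(p)+B_m(p)<0,
\]
which is precisely $\frac{d}{dp}g_{\ell,m'}(p)<0$, as required.

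For part (ii), I would first rewrite the claimed inequality in the equivalent ratio form $\frac{g_{\ell,m'}(p')}{g_{\ell,m'}(p)}\le \frac{g_{\ell',m}(p')}{g_{\ell',m}(p)}$ and take logarithms, reducing it to
\[
\int_p^{p'}\big[\ell\,A(s)+B_{m'}(s)\big]\,ds\le\int_p^{p'}\big[\ell'\,A(s)+B_m(s)\big]\,ds.
\]
It then suffices to prove the pointwise bound $\ell\,A(s)+B_{m'}(s)\le\ell'\,A(s)+B_m(s)$ for every $s\in(p,p')\subseteq(0,\half)$, i.e.\ $B_{m'}(s)-B_m(s)\le(\ell'-\ell)A(s)$, whose left side is $\le 0$ by the monotonicity of $B_m$ in $m$ and whose right side is $\ge 0$ since $\ell\le\ell'$ and $A(s)>0$; integrating over $[p,p']$ and exponentiating finishes the proof. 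The only genuinely nontrivial ingredient is thus the $m$-monotonicity of $B_m$ extracted from Lemma~\ref{lem1}(i); everything else is sign bookkeeping, and I would only take extra care to confirm that the derivative formulas remain valid in the degenerate cases $\ell=0$ or $m=0$, where the corresponding factor is constant.
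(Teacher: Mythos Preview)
Your proof is correct, and at heart it rests on the same fact as the paper's: the ratio $g_{\ell',m}(p)/g_{\ell,m'}(p)$ is increasing on $(0,\tfrac12)$. The paper proves this by writing the ratio as $(pq)^{\ell'-\ell}\cdot\frac{p^m+q^m}{p^{m'}+q^{m'}}$ and computing the derivative of $f_{m,m'}(p)=\frac{p^m+q^m}{p^{m'}+q^{m'}}$ directly, via an explicit algebraic expansion; it then reads off part~(i) from the quotient rule and part~(ii) from the monotonicity of the ratio. Your route instead decomposes the logarithmic derivative as $\ell A(p)+B_m(p)$ and reduces everything to the single inequality $B_{m+1}(p)\le B_m(p)$, which you obtain for free from Lemma~\ref{lem1}(i) by recognising $B_{m+1}-B_m$ as the logarithmic derivative of $(p^{m+1}+q^{m+1})/(p^m+q^m)$. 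This telescoping use of Lemma~\ref{lem1}(i) is a tidy shortcut that avoids the paper's separate derivative computation for $f_{m,m'}$; conversely, the paper's argument is self-contained and does not need to invoke Lemma~\ref{lem1}. Both then finish part~(ii) the same way, yours phrased via integrating the pointwise bound on log-derivatives, the paper's via the equivalent monotonicity of the ratio.
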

\begin{proof}

(i)  Define, for $0 \leq p \leq \half$, 
\begin{align*}
f_{m,m'}(p)&= \frac{p^m+q^m}{p^{m'}+q^{m'}},\;\;\text{and}\;\;
 g_{k,m}(p)=(pq)^k (p^m+q^m),
 \end{align*}
where $k$ is a non-negative integer.

For $m=m'$,  $f_{m,m'}(p)=1$ for all $0\leq p \leq \half$.  For $m'>m\geq 0$,
we have
\begin{align*}
(p^{m'}+q^{m'})^2 \frac{d}{dp} f_{m,m'}(p)
&=m(p^{m-1}-q^{m-1})(p^{m'}+q^{m'}) -m' (p^m+q^m)(p^{m'-1}-q^{m'-1})\\
&=(m'-m)(q^{m+m'-1}-p^{m+m'-1})+m(p^{m-1}q^{m'}-p^{m'}q^{m-1})\\
&\qquad \qquad +m'(p^mq^{m'-1}-p^{m'-1}q^m)\\
&\geq 0 \;\;(\text{for} 
\;\; 0\leq p \leq \half),
\end{align*}
so $f_{m,m'}(p)$ is increasing in $p \in [0,\half]$.
Since $(pq)^{\ell'-\ell}$ is increasing in $p \in [0,\half]$, it follows that
\[
\frac{g_{\ell',m}(p)}{g_{\ell,m'}(p)}=(pq)^{\ell'-\ell} f_{m,m'}(p)
\]
is increasing in $p \in [0,\half]$, so that for $0<p<\half$
\begin{align*}
0&\leq \frac{d}{dp} \Big(\frac{g_{\ell',m}(p)}{g_{\ell,m'}(p)}\Big)\\
&=(g_{\ell,m'}(p))^{-2} \Big( g_{\ell,m'}(p) \frac{d}{dp} g_{\ell',m}(p)-g_{\ell',m}(p) 
\frac{d}{dp} g_{\ell, m'}(p)\Big).
\end{align*}
Thus, $\frac{d}{dp}g_{\ell',m}(p)<0$ implies 
$\frac{d}{dp} g_{\ell, m'}(p)<0$.

(ii)  Since $(pq)^{\ell'-\ell} f_{m,m'}(p)=(pq)^{\ell'-\ell} \Big(\frac{p^m+q^m}{p^{m'}+q^{m'}}\Big)$ 
is increasing in $p \in [0,\half]$, we have for $0<p<p'\leq \half$,
\[
(pq)^{\ell'-\ell} \Big(\frac{p^m+q^m}{p^{m'}+q^{m'}}\Big) \leq (p'q')^{\ell'-\ell}
\Big(\frac{(p')^m+(q')^m}{(p')^{m'}+(q')^{m'}}\Big),
\]
which is equivalent to
\[
\frac{(p' q')^{\ell}((p')^{m'}+(q')^{m'})}{(pq)^{\ell} (p^{m'}+q^{m'})}\leq 
\frac{(p' q')^{\ell'}((p')^{m}+(q')^{m})}{(pq)^{\ell'} (p^{m}+q^{m})}.
\]
The proof is complete.
\end{proof}

\begin{proof}[Proof of Theorem \ref{thm5}] 
It suffices to prove Theorem \ref{thm5} for all $b_n\geq 1$. With $c_n=b_n-1\geq 0$, it is equivalent to showing
that for each $n=1,2,\dots$, and for $c_i\geq 0, i=1,\dots, n$,
\begin{align}\label{T3}
P(|U_i^p|\leq c_i, i=1,\dots, n)\;\;\text{is increasing in}\;\; p \in [0,\half].
\end{align}
 Since $(X_0, X_1^p,\dots, X_n^p)$ specifies the initial state and increments
 of  the sample path
$$(U_0^p, U_1^p,\dots, U_n^p)=(X_0, X_0^p+X_1^p,\dots, X_0+X_1^p+\cdots+X_n^p),$$
$(X_0, X_1^p,\dots, X_n^p)$ may be identified with $(U_0^p, U_1^p\dots, U_n^p)$.  For convenience,
we  refer to $(X_0, X_1^p,\dots, X_n^p)$ as a sample path of length $n$.

Write $\underline{\omega}_n=(\omega_0,\omega_1, \dots, \omega_n)$. Let
\begin{align}
\MSO_{c_1,\dots, c_n}&=\{\underline{\omega}_n \in \{+1,-1\}^{n+1}:
|\omega_0+\omega_1+\cdots+\omega_i|\leq c_i, i=1,\dots, n\}, \label{eq3.1}\\
\MSO_{c_1,\dots, c_{n-1}}^k&=\{\underline{\omega}_n \in \{+1,-1\}^{n+1}:
|\omega_0+\omega_1+\cdots+\omega_i|\leq c_i, i=1,\dots, n-1,\notag\\
&\qquad \qquad \qquad \qquad \qquad \qquad \omega_0+\omega_1+\cdots+\omega_{n}=k\}, \label{eq3.2}\\
\MSOU_{c_1,\dots, c_{n-1}}^k&=\{\underline{\omega}_n \in 
\MSO_{c_1,\dots, c_{n-1}}^k: \omega_0=+1\}, \label{eq3.3}\\
\MSOD_{c_1,\dots, c_{n-1}}^k&=\{\underline{\omega}_n \in 
\MSO_{c_1,\dots, c_{n-1}}^k: \omega_0=-1\}. \label{eq3.4}
\end{align}
Note that $\MSO_{c_1,\dots,c_{n-1}}^k$ is a disjoint union of
$\MSOU_{c_1,\dots, c_{n-1}}^k$ and $\MSOD_{c_1,\dots, c_{n-1}}^k$.
Note also that 
\begin{align}
|\MSOU_{c_1,\dots,c_{n-1}}^k|=|\MSOD_{c_1,\dots, c_{n-1}}^{-k}|, \label{eq3.40}
\end{align}
where $|\cal{A}|$ denotes the cardinality of set $\cal{A}$.
Since $k=\omega_0+\omega_1+\cdots+\omega_n$ with all $\omega_i \in \{+1,-1\}$ implies $k-n=1 \mod 2$,
we have by (\ref{eq3.2})--(\ref{eq3.4}),
\begin{align*}
\MSO_{c_1,\dots,c_{n-1}}^k=\MSOU_{c_1,\dots,c_{n-1}}^k=
\MSOD_{c_1,\dots,c_{n-1}}^k=\emptyset, \;\;\text{if}\;\;k-n=0 \mod 2.
\end{align*}
  Moreover, for $k$ and $n$ having opposite parities with $|k|\leq n+1$,
each sample path in $\MSOU_{c_1,\dots, c_{n-1}}^k$ has probability
$\half p^{(n+k-1)/2} q^{(n-k+1)/2}$ while 
each sample path in $\MSOD_{c_1,\dots, c_{n-1}}^k$ has probability
$\half p^{(n+k+1)/2} q^{(n-k-1)/2}$. (Note that $|\MSOU_{c_1,\dots, c_{n-1}}^k|=0$ 
for $k=-n-1$ and $|\MSOD_{c_1,\dots, c_{n-1}}^k|=0$ for $k=n+1$.)
 So, for $k-n=1 \mod 2$,
\begin{align}
P&((X_0,\dots,X_n^p)\in \MSO_{c_1,\dots, c_{n-1}}^k)\notag\\
&=P((X_0,\dots,X_n^p)\in \MSOU_{c_1,\dots, c_{n-1}}^k)+P((X_0,\dots,X_n^p)\in \MSOD_{c_1,\dots, c_{n-1}}^k)\notag\\
&=\half |\MSOU_{c_1,\dots,c_{n-1}}^k|\;  p^{(n+k-1)/2} q^{(n-k+1)/2}
+\half |\MSOD_{c_1,\dots,c_{n-1}}^k|\;  p^{(n+k+1)/2} q^{(n-k-1)/2}.\label{eq3.41}
\end{align}
For odd $n$ and $c_n=0$, we have by (\ref{eq3.40})--(\ref{eq3.41}),
\begin{align*}
P(|U_i^p|&\leq c_i, i=1,\dots, n)\\
&=P(|U_i^p|\leq c_i, i=1,\dots, n-1,
U_n^p=0)\\
&=P((X_0,\dots,X_n^p) \in \MSO_{c_1,\dots,c_{n-1}}^0)\\
&=\half |\MSTU_{c_1,\dots,c_{n-1}}^0|\; p^{(n-1)/2} q^{(n+1)/2}
+ \half |\MSOD_{c_1,\dots,c_{n-1}}^0| \; p^{(n+1)/2} q^{(n-1)/2}\\
&=\half |\MSOU_{c_1,\dots, c_{n-1}}^0|\;  (pq)^{(n-1)/2},
\end{align*}
which is increasing in $p \in [0,\half]$.
 So 
 \begin{align}\label{eqextra}
 P(|U_i^p|\leq c_i, i=1,\dots, n)\;\;\text{is increasing in}\; p \in [0,\half]\;\text{for odd}\;n\;\text{and}\;\;c_n=0.
 \end{align}

For $n=1$ and $c_1=0$ or $1$, we have $P(|U_1^p|\leq c_1)=P(U_1^p=0)=\half$ for all $p$.
For $n=1$ and  $c_1\geq 2$, we have $P(|U_1^p|\leq c_1)=1$
for all $p$. So (\ref{T3}) holds for $n=1$. We proceed by induction on $n$.
Suppose (\ref{T3})  holds for $n=m$ and
all $c_i\geq 0\;(i=1,\dots, m)$ for some $m\geq 1$. We need to show
that for  $c_i\geq 0$, $i=1,\dots, m+1$,
\begin{align}\label{eq3.5}
P(|U_i^p|\leq c_i, i=1,\dots, m+1)\;\;\text{is increasing in}\;\;
p \in [0,\half].
\end{align}

Since with probability $1$, $U_i^p-U_0^p$ and $i$ have the same parity, to prove (\ref{eq3.5}), we may assume without loss of
generality that $c_i$ and $i$ have opposite parities for  $i=1,\dots, m+1$. Furthermore, for some
$1<j<m+1$, let $c_i'=c_i$ for all $i \neq j$ and $c_j'=\min\{1+j, c_j, c_{j-1}+1, c_{j+1}+1\}$.
Then it is readily seen that
\[
P(|U_i^p|\leq c_i, i=1,\dots, m+1)=P(|U_i^p|\leq c_i', i=1,\dots, m+1) \;\;\text{for all $p \in [0,1]$}.
\]
More generally, it is not difficult to see that for given
$c_i\geq 0, i=1,\dots,m+1$, 
there exist $c_i'\geq 0, i=1,\dots, m+1$ with
$c_i'-i=1 \mod 2$, $c_i'\leq 1+i$ and  $|c_{i+1}'-c_i'|=1$ for all $i$, such that
\[
P(|U_i^p|\leq c_i, i=1,\dots, m+1)=P(|U_i^p|\leq c_i', i=1,\dots, m+1) \;\;\text{for all $p \in [0,1]$}.
\]
So, to prove (\ref{eq3.5}), we assume without loss of generality that
$c_i\geq 0, i=1,\dots,m+1$ satisfy
$c_i-i=1 \mod 2$, $c_i\leq 1+i$, and $|c_{i+1}-c_i|=1$ for all $i$.

If $c_{m+1}=c_m+1$, then
\[
P(|U_i^p|\leq c_i, i=1,\dots, m+1)=P(|U_i^p|\leq c_i, i=1,\dots, m),
\]
which by the induction hypothesis, is increasing in $p \in [0,\half]$. This proves (\ref{eq3.5}) 
for $c_{m+1}=c_m+1$.

Next consider $c_{m+1}=c_m-1$. If $m$ is odd, necessarily $c_m\geq 2$. If $m$ is even and $c_m=1$,
then (\ref{eq3.5}) holds  by (\ref{eqextra}). Below we assume even $c_m \geq 2$ for odd $m$ and
odd $c_m\geq 3$ for even $m$.   Let $c_i'=c_i$ for $i\leq m-1$ and
$c_m'=c_m-2$. Note that
the event $\{|U_i^p|\leq c_i, i=1,\dots, m+1\}$ is a disjoint union
of $\{|U_i^p|\leq c_i', i=1,\dots, m \}$ and $\{|U_i^p|\leq c_i,
i=1, \dots, m-1, |U_m^p|=c_m, |U_{m+1}^p|=c_m-1\}$, implying that
\begin{align}
P(|U_i^p|\leq c_i, i=1,\dots, m+1)=P(|U_i^p|\leq c_i', i=1,\dots, m)
+\gamma,\label{eq3.50}
\end{align}
where
\begin{align}
\gamma&=P(|U_i^p|\leq c_i, i=1,\dots, m-1,|U_m^p|=c_m, |U_{m+1}^p|=c_m-1)
\notag\\
&=P(|U_i^p|\leq c_i, i=1, \dots, m-1, U_m^p=c_m, X_{m+1}^p=-1)\notag\\
&\qquad +P(|U_i^p|\leq c_i, i=1, \dots, m-1, U_m^p=-c_m, X_{m+1}^p=+1)\
\notag\\
&=q \;P(|U_i^p|\leq c_i, i=1, \dots, m-1, U_m^p=c_m)\notag\\
&\qquad + p \;P(|U_i^p|\leq c_i, i=1, \dots, m-1, U_m^p=-c_m)\notag\\
&=q\; P((X_0,\dots,X_m^p)\in \MSO_{c_1,\dots,c_{m-1}}^{c_m})
+p\; P((X_0,\dots,X_m^p)\in \MSO_{c_1,\dots,c_{m-1}}^{-c_m}).\label{eq3.6} 
\end{align}
 By (\ref{eq3.6}) together with (\ref{eq3.40})--(\ref{eq3.41}), we have
\begin{align}
\gamma
&=q\Big[\half |\MSOU_{c_1,\dots,c_{m-1}}^{c_m}|\;  p^{(m+c_m-1)/2} q^{(m-c_m+1)/2}
+\half |\MSOD_{c_1,\dots,c_{m-1}}^{c_m}|\;  p^{(m+c_m+1)/2} q^{(m-c_m-1)/2} \Big] \notag\\
&+p\Big[\half |\MSOU_{c_1,\dots,c_{m-1}}^{-c_m}|\;  p^{(m-c_m-1)/2} q^{(m+c_m+1)/2}
+\half |\MSOD_{c_1,\dots,c_{m-1}}^{-c_m}|\;  p^{(m-c_m+1)/2} q^{(m+c_m-1)/2} \Big] \notag\\
& =\half \; |\MSOU_{c_1,\dots, c_{m-1}}^{c_m}|\; 
 (pq)^{(m-c_m+3)/2} \Big[p^{c_m-2}+q^{c_m-2}\Big]\notag\\
 &\qquad \qquad+\half \; |\MSOD_{c_1,\dots, c_{m-1}}^{c_m}|\; 
 (pq)^{(m-c_m+1)/2} \Big[p^{c_m}+q^{c_m}\Big]. 
 \label{eq3.7}
\end{align}
By (\ref{eq3.50}) and (\ref{eq3.7}),
\begin{align}
P(|U_i^p|&\leq c_i, i=1,\dots, m+1)\notag\\
&=P(|U_i^p|\leq c_i', i=1,\dots, m)\notag\\
&\qquad +\half \; |\MSOU_{c_1,\dots, c_{m-1}}^{c_m}|\; 
 (pq)^{(m-c_m+3)/2} \Big[p^{c_m-2}+q^{c_m-2}\Big]\notag\\
 &\qquad +\half \; |\MSOD_{c_1,\dots, c_{m-1}}^{c_m}|\; 
 (pq)^{(m-c_m+1)/2} \Big[p^{c_m}+q^{c_m}\Big].\label{eq3.700}
 \end{align}

On the other hand, observing that
the event $\{|U_i^p|\leq c_i, i=1,\dots,m\}$ is a disjoint
union of $\{|U_i^p|\leq c_i, i=1,\dots, m+1\}$ and
$\{|U_i^p|\leq c_i, i=1,\dots, m-1, |U_m^p|=c_m, |U_{m+1}^p|=c_m+1\}$,
we have
\begin{align}
P(|U_i^p|\leq c_i, i=1,\dots, m)
=P(|U_i^p|\leq c_i, i=1\dots, m+1)+\delta, \label{eq3.8}
\end{align}
where 
\begin{align}
\delta&=P(|U_i^p|\leq c_i, i=1,\dots, m-1, |U_m^p|=c_m, |U_{m+1}^p|=c_m+1)\notag\\
&=P(|U_i^p|\leq c_i, i=1,\dots, m-1, U_m^p=c_m, X_{m+1}^p=+1)\notag\\
&\qquad + P(|U_i^p|\leq c_i, i=1,\dots, m-1, U_m^p=-c_m, X_{m+1}^p=-1)\notag\\
&=p\; P(|U_i^p|\leq c_i, i=1,\dots, m-1, U_m^p=c_m)\notag\\
&\qquad + q\; P(|U_i^p|\leq c_i, i=1,\dots, m-1, U_m^p=-c_m)\notag\\
&=p\; P((X_0,\dots,X_m^p)\in \MSO_{c_1,\dots,c_{m-1}}^{c_m})+q \; P((X_0,\dots,X_m^p)\in \MSO_{c_1,\dots,c_{m-1}}^{-c_m}).\label{eq3.9}
\end{align}
By (\ref{eq3.9}) together with (\ref{eq3.40})--(\ref{eq3.41}),
\begin{align}
\delta
&=p \Big[\half |\MSOU_{c_1,\dots,c_{m-1}}^{c_m}|\;  p^{(m+c_m-1)/2} q^{(m-c_m+1)/2}
+\half |\MSOD_{c_1,\dots,c_{m-1}}^{c_m}|\;  p^{(m+c_m+1)/2} q^{(m-c_m-1)/2} \Big] \notag\\
&+q \Big[\half |\MSOU_{c_1,\dots,c_{m-1}}^{-c_m}|\;  p^{(m-c_m-1)/2} q^{(m+c_m+1)/2}
+\half |\MSOD_{c_1,\dots,c_{m-1}}^{-c_m}|\;  p^{(m-c_m+1)/2} q^{(m+c_m-1)/2} \Big] \notag\\
&=\half \; |\MSOU_{c_1,\dots, c_{m-1}}^{c_m}|\; 
 (pq)^{(m-c_m+1)/2} \Big[p^{c_m}+q^{c_m}\Big]\notag\\
 &\qquad \qquad +\half \; |\MSOD_{c_1,\dots, c_{m-1}}^{c_m}|\; 
 (pq)^{(m-c_m-1)/2} \Big[p^{c_m+2}+q^{c_m+2}\Big]. \label{eq3.91}
 \end{align}
 (Recall that $c_m$ is assumed to be less than or equal to $1+m$. If $c_m=1+m$, then
 $|\MSOD_{c_1,\dots, c_{m-1}}^{c_m}|=0$, so that the second term on the right side of (\ref{eq3.91})
 vanishes.)
By (\ref{eq3.8}) and (\ref{eq3.91}),
\begin{align}
P(|U_i^p|&\leq c_i, i=1,\dots, m+1)\notag\\
&=P(|U_i^p|\leq c_i, i=1,\dots,m)\notag\\
&\qquad -\half \; |\MSOU_{c_1,\dots, c_{m-1}}^{c_m}|\; 
 (pq)^{(m-c_m+1)/2} \Big[p^{c_m}+q^{c_m}\Big]\notag\\
 &\qquad -\half \; |\MSOD_{c_1,\dots, c_{m-1}}^{c_m}|\; 
 (pq)^{(m-c_m-1)/2} \Big[p^{c_m+2}+q^{c_m+2}\Big].\label{eq3.70}
 \end{align}
By the induction hypothesis, for $p \in (0,\half)$,
\begin{align}
\frac{d}{dp} P(|U_i^p|\leq c_i, i=1,\dots, m)\geq 0,\;\;\text{and}\;\;
\frac{d}{dp} P(|U_i^p|\leq c_i', i=1,\dots, m)\geq 0.\label{eq3.11}
\end{align}
We claim that 
\begin{align}\label{eq3.7001}
\frac{d}{dp} P(|U_i^p|\leq c_i, i=1,\dots, m+1)\geq 0\;\;\text{for $p \in (0,\half)$}.
\end{align}
If 
\[
\frac{d}{dp} (pq)^{(m-c_m+1)/2} \Big[p^{c_m}+q^{c_m}\Big]\geq 0,
\]
then by Lemma \ref{lem3}(i), 
\[
\frac{d}{dp} (pq)^{(m-c_m+3)/2} \Big[p^{c_m-2}+q^{c_m-2}\Big]\geq 0,
\]
so that by (\ref{eq3.700}) and (\ref{eq3.11}),
we have
\[
\frac{d}{dp} P(|U_i^p|\leq c_i, i=1,\dots, m+1) \geq 0.
\]
Suppose
\[
\frac{d}{dp} (pq)^{(m-c_m+1)/2} \Big[p^{c_m}+q^{c_m}\Big]<0.
\]
For $c_m\leq m-1$, by Lemma \ref{lem3}(i), 
\[
\frac{d}{dp} (pq)^{(m-c_m-1)/2} \Big[p^{c_m+2}+q^{c_m+2}\Big]< 0,
\]
so that by (\ref{eq3.70}) and (\ref{eq3.11}), we have
\begin{align*}
\frac{d}{dp} P&(|U_i^p|\leq c_i, i=1,\dots, m+1)\geq 0.
\end{align*}
For $c_m=m+1$, since $|\MSOD_{c_1,\dots, c_{m-1}}^{c_m}|=0$, 
we have  by (\ref{eq3.70}) and (\ref{eq3.11}), 
\begin{align*}
\frac{d}{dp} P&(|U_i^p|\leq c_i, i=1,\dots, m+1)\geq 0,
\end{align*}
establishing the claim (\ref{eq3.7001}).
It follows that
$P(|U_i^p|\leq c_i, i=1,\dots, m+1)$
is increasing in $p \in [0,\half]$, proving  (\ref{eq3.5}). The induction proof of
(\ref{T3}) is complete.
\end{proof}

\begin{proof}[Proof of Theorem \ref{thm6}]

For $p=0$, we have $\MP(T_{b_1,b_2,\dots}^p=n^*)=1$ where $n^*=\min\{n\geq 1: n \geq b_n\}$.
Trivially, for $0=p<p'\leq \half$, $T_{b_1,b_2,\dots}^p$ is smaller than $T_{b_1,b_2,\dots}^{p'}$ 
in the likelihood ratio order. 
For $0<p<p'\leq \half$, the distributions of $T_{b_1,b_2,\dots}^p$ and $T_{b_1,b_2,\dots}^{p'}$
have common support.
Let $n<n'$ be two consecutive points of the common support. We need to show
\begin{align}\label{LR}
\rho(n,p,p')\leq \rho(n',p,p'),
\end{align}
where 
$\rho(n,p,p')=P(T_{b_1,b_2,\dots}^{p'}=n)/P(T_{b_1,b_2,\dots}^p=n)$ is the likelihood ratio.

By
the assumption that $b_{i+1}\leq b_i$ for all $i\geq 1$, we have  $n'=n+1$ or $n+2$.
Suppose $n'=n+1$. Let $c_i=b_i-1$ for $i \geq 1$. (Note that $c_i\geq 0$ for $i\leq n$ since $n+1$ 
belongs to the support.)
Define
$\sigma=\{k\geq b_n: |\MS_{c_1,\dots, c_{n-1}}^k|>0\}$ and $\sigma'=\{k\geq b_{n+1}: 
|\MS_{c_1,\dots, c_{n}}^k|>0\}$, where for $m=n, n+1$,
\begin{align*}
\MS_{c_1,\dots, c_{m-1}}^k=\{(\omega_1,\dots, \omega_m)\in \{+1, -1\}^m&:
|\omega_1+\dots+\omega_i|\leq c_i, i=1,\dots, m-1, \\
&\qquad \omega_1+\cdots+\omega_m=k\}.
\end{align*}
Then  $|\sigma|>0$ and $|\sigma'|>0$. Also $k>k'$ for $k \in \sigma$ and $k'\in \sigma'$.
Let $k_1$ be the smallest number in $\sigma$ and $k_2$ the largest number in $\sigma'$, so that
$k_1>k_2$. 
Note that
\begin{align}
\rho(n,p,p')&=\frac{P(T^{p'}_{b_1,b_2,\dots}=n)}{P(T^p_{b_1,b_2,\dots}=n)}\notag\\
&=\frac{\sum_{k \in \sigma} |\MS_{c_1,\dots, c_{n-1}}^k| \; (p' q')^{(n-k)/2}((p')^k+(q')^k)}
{\sum_{k \in \sigma} |\MS_{c_1,\dots, c_{n-1}}^k| \; (pq)^{(n-k)/2}(p^k+q^k)}\notag\\
&=\frac{\sum_{k \in \sigma} |\MS_{c_1,\dots, c_{n-1}}^k| \;  (pq)^{(n-k)/2}(p^k+q^k)
\Big[\frac{ (p' q')^{(n-k)/2}((p')^k+(q')^k)}{(pq)^{(n-k)/2}(p^k+q^k)}\Big]}
{\sum_{k \in \sigma} |\MS_{c_1,\dots, c_{n-1}}^k| \; (pq)^{(n-k)/2}(p^k+q^k)}\notag\\
&\leq \frac{ (p' q')^{(n-k_1)/2}((p')^{k_1}+(q')^{k_1})}{(pq)^{(n-k_1)/2}(p^{k_1}+q^{k_1})},\label{ML1}
\end{align}
where the last inequality follows from Lemma 3(ii).
Similarly,
\begin{align}
\rho(n+1,p,p')&=\frac{P(T^{p'}_{b_1,b_2,\dots}=n+1)}{P(T^p_{b_1,b_2,\dots}=n+1)}\notag\\
&=\frac{\sum_{k \in \sigma'} |\MS_{c_1,\dots, c_{n}}^k| \; (p' q')^{(n+1-k)/2}((p')^k+(q')^k)}
{\sum_{k \in \sigma'} |\MS_{c_1,\dots, c_{n}}^k| \; (pq)^{(n+1-k)/2}(p^k+q^k)}\notag\\
&=\frac{\sum_{k \in \sigma'} |\MS_{c_1,\dots, c_{n}}^k| \;  (pq)^{(n+1-k)/2}(p^k+q^k)
\Big[\frac{ (p' q')^{(n+1-k)/2}((p')^k+(q')^k)}{(pq)^{(n+1-k)/2}(p^k+q^k)}\Big]}
{\sum_{k \in \sigma'} |\MS_{c_1,\dots, c_{n}}^k| \; (pq)^{(n+1-k)/2}(p^k+q^k)}\notag\\
&\geq \frac{ (p' q')^{(n+1-k_2)/2}((p')^{k_2}+(q')^{k_2})}{(pq)^{(n+1-k_2)/2}(p^{k_2}+q^{k_2})}.\label{ML2}
\end{align}
Since $k_1>k_2$,  (\ref{LR}) follows from  Lemma 3(ii), (\ref{ML1}) and (\ref{ML2}). 
The case $n'=n+2$ can be treated similarly. The proof of Theorem \ref{thm6} is complete.

\end{proof}





Shoou-Ren Hsiau, Department of Mathematics, National Changhua University of Education, Taiwan, ROC.
Email: srhsiau@cc.ncue.edu.tw

Yi-Ching Yao, Institute of Statistical Science, Academia Sinica, Taiwan, ROC. Email: yao@stat.sinica.edu.tw

\end{document}